\DeclareMathOperator{\Spec}{\textup{Spec}}
\DeclareMathOperator{\et}{\textup{et}}
\theoremstyle{plain}
\newtheorem{thm}{Theorem}[section]\setcounter{thm}{0}
\newtheorem*{thm*}{Theorem}
\newtheorem{lem}[thm]{Lemma}
\theoremstyle{remark}
\newtheorem{rmk}[thm]{Remark}
\newtheorem*{rmk*}{Remark}
\theoremstyle{definition}
\newtheorem*{const*}{Construction}
\theoremstyle{plain}
\newtheorem{thmI}{Theorem}
\newtheorem{thmII}{Theorem}
\def\cov{{\rm Cov}}
\def\eet{{\textup{Ét}}}
\begin{document}
\title[Comparison]{A Comparison Theorem For The Pro-étale Fundamental
Group}
\author{Jiu-Kang Yu, Lei Zhang}

\date{\today}

\global\long\def\A{\mathbb{A}}

\global\long\def\Ab{(\textup{Ab})}

\global\long\def\C{\mathbb{C}}

\global\long\def\Cat{(\textup{Cat})}

\global\long\def\Di#1{\textup{D}(#1)}

\global\long\def\E{\mathbb{E}}

\global\long\def\F{\mathbb{F}}

\global\long\def\GCov{G\textup{-Cov}}

\global\long\def\Gcat{(\textup{Galois cat})}

\global\long\def\Gfsets#1{#1\textup{-fsets}}

\global\long\def\Gm{\mathbb{G}_{m}}

\global\long\def\GrCov#1{\textup{D}(#1)\textup{-Cov}}

\global\long\def\Grp{(\textup{Grps})}

\global\long\def\Gsets#1{(#1\textup{-sets})}

\global\long\def\HCov{H\textup{-Cov}}

\global\long\def\MCov{\textup{D}(M)\textup{-Cov}}

\global\long\def\MHilb{M\textup{-Hilb}}

\global\long\def\N{\mathbb{N}}

\global\long\def\PGor{\textup{PGor}}

\global\long\def\PGrp{(\textup{Profinite Grp})}

\global\long\def\PP{\mathbb{P}}

\global\long\def\Pj{\mathbb{P}}

\global\long\def\Q{\mathbb{Q}}

\global\long\def\RCov#1{#1\textup{-Cov}}

\global\long\def\RR{\mathbb{R}}

\global\long\def\Sch{\textup{Sch}}

\global\long\def\WW{\textup{W}}

\global\long\def\Z{\mathbb{Z}}

\global\long\def\acts{\curvearrowright}

\global\long\def\alA{\mathscr{A}}

\global\long\def\alB{\mathscr{B}}

\global\long\def\arr{\longrightarrow}

\global\long\def\arrdi#1{\xlongrightarrow{#1}}

\global\long\def\catC{\mathscr{C}}

\global\long\def\catD{\mathscr{D}}

\global\long\def\catF{\mathscr{F}}

\global\long\def\catG{\mathscr{G}}

\global\long\def\comma{,\ }

\global\long\def\covU{\mathcal{U}}

\global\long\def\covV{\mathcal{V}}

\global\long\def\covW{\mathcal{W}}

\global\long\def\duale#1{{#1}^{\vee}}

\global\long\def\fasc#1{\widetilde{#1}}

\global\long\def\fsets{(\textup{f-sets})}

\global\long\def\iL{r\mathscr{L}}

\global\long\def\id{\textup{id}}

\global\long\def\la{\langle}

\global\long\def\odi#1{\mathcal{O}_{#1}}

\global\long\def\ra{\rangle}

\global\long\def\set{(\textup{Sets})}

\global\long\def\sets{(\textup{Sets})}

\global\long\def\shA{\mathcal{A}}

\global\long\def\shB{\mathcal{B}}

\global\long\def\shC{\mathcal{C}}

\global\long\def\shD{\mathcal{D}}

\global\long\def\shE{\mathcal{E}}

\global\long\def\shF{\mathcal{F}}

\global\long\def\shG{\mathcal{G}}

\global\long\def\shH{\mathcal{H}}

\global\long\def\shI{\mathcal{I}}

\global\long\def\shJ{\mathcal{J}}

\global\long\def\shK{\mathcal{K}}

\global\long\def\shL{\mathcal{L}}

\global\long\def\shM{\mathcal{M}}

\global\long\def\shN{\mathcal{N}}

\global\long\def\shO{\mathcal{O}}

\global\long\def\shP{\mathcal{P}}

\global\long\def\shQ{\mathcal{Q}}

\global\long\def\shR{\mathcal{R}}

\global\long\def\shS{\mathcal{S}}

\global\long\def\shT{\mathcal{T}}

\global\long\def\shU{\mathcal{U}}

\global\long\def\shV{\mathcal{V}}

\global\long\def\shW{\mathcal{W}}

\global\long\def\shX{\mathcal{X}}

\global\long\def\shY{\mathcal{Y}}

\global\long\def\shZ{\mathcal{Z}}

\global\long\def\st{\ | \ }

\global\long\def\stA{\mathcal{A}}

\global\long\def\stB{\mathcal{B}}

\global\long\def\stC{\mathcal{C}}

\global\long\def\stD{\mathcal{D}}

\global\long\def\stE{\mathcal{E}}

\global\long\def\stF{\mathcal{F}}

\global\long\def\stG{\mathcal{G}}

\global\long\def\stH{\mathcal{H}}

\global\long\def\stI{\mathcal{I}}

\global\long\def\stJ{\mathcal{J}}

\global\long\def\stK{\mathcal{K}}

\global\long\def\stL{\mathcal{L}}

\global\long\def\stM{\mathcal{M}}

\global\long\def\stN{\mathcal{N}}

\global\long\def\stO{\mathcal{O}}

\global\long\def\stP{\mathcal{P}}

\global\long\def\stQ{\mathcal{Q}}

\global\long\def\stR{\mathcal{R}}

\global\long\def\stS{\mathcal{S}}

\global\long\def\stT{\mathcal{T}}

\global\long\def\stU{\mathcal{U}}

\global\long\def\stV{\mathcal{V}}

\global\long\def\stW{\mathcal{W}}

\global\long\def\stX{\mathcal{X}}

\global\long\def\stY{\mathcal{Y}}

\global\long\def\stZ{\mathcal{Z}}

\global\long\def\then{\ \Longrightarrow\ }

\global\long\def\L{\textup{L}}

\global\long\def\l{\textup{l}}

\newcommand{\B}{{\mathbb B}}
\newcommand{\D}{{\mathbb D}}
\newcommand{\G}{{\mathbb G}}
\renewcommand{\H}{{\mathbb H}}
\newcommand{\I}{{\mathbb I}}
\newcommand{\J}{{\mathbb J}}
\newcommand{\M}{{\mathbb M}}
\renewcommand{\P}{{\mathbb P}}
\newcommand{\R}{{\mathbb R}}
\newcommand{\T}{{\mathbb T}}
\newcommand{\U}{{\mathbb U}}
\newcommand{\V}{{\mathbb V}}
\newcommand{\W}{{\mathbb W}}
\newcommand{\X}{{\mathbb X}}
\newcommand{\Y}{{\mathbb Y}}

\newcommand{\sA}{{\mathcal A}}
\newcommand{\sB}{{\mathcal B}}
\newcommand{\sC}{{\mathcal C}}
\newcommand{\sD}{{\mathcal D}}
\newcommand{\sE}{{\mathcal E}}
\newcommand{\sF}{{\mathcal F}}
\newcommand{\sG}{{\mathcal G}}
\newcommand{\sH}{{\mathcal H}}
\newcommand{\sI}{{\mathcal I}}
\newcommand{\sJ}{{\mathcal J}}
\newcommand{\sK}{{\mathcal K}}
\newcommand{\sL}{{\mathcal L}}
\newcommand{\sM}{{\mathcal M}}
\newcommand{\sN}{{\mathcal N}}
\newcommand{\sO}{{\mathcal O}}
\newcommand{\sP}{{\mathcal P}}
\newcommand{\sQ}{{\mathcal Q}}
\newcommand{\sR}{{\mathcal R}}
\newcommand{\sS}{{\mathcal S}}
\newcommand{\sT}{{\mathcal T}}
\newcommand{\sU}{{\mathcal U}}
\newcommand{\sV}{{\mathcal V}}
\newcommand{\sW}{{\mathcal W}}
\newcommand{\sX}{{\mathcal X}}
\newcommand{\sY}{{\mathcal Y}}
\newcommand{\sZ}{{\mathcal Z}}


\newcommand{\Aff}{{\rm Aff}}
\newcommand{\Aut}{{\rm Aut}}
\newcommand{\an}{{\rm an}}
\newcommand{\Bd}{{\rm Band}}
\newcommand{\Cats}{{\rm Cats}}
\newcommand{\ch}{\textup{Ch}}
\newcommand{\Char}{{\rm char}}
\newcommand{\codim}{{\rm codim}}
\newcommand{\cont}{{\rm cont}}
\newcommand{\Cov}{\textup{Cov}}
\newcommand{\Crys}{{\rm Crys}}
\newcommand{\cts}{\textup{cts}}
\newcommand{\Div}{{\rm Div}}
\newcommand{\Dmod}{{\rm Dmod}}
\newcommand{\ECov}{{\rm ECov}}
\newcommand{\ed}{{\rm ed}}
\newcommand{\Ess}{{\rm EFin}}
\renewcommand{\et}{\textup{\'et}}
\newcommand{\ev}{\textup{ev}}
\newcommand{\Fdiv}{{\rm Fdiv}}
\newcommand{\Fib}{{\rm Fib}}
\newcommand{\FSets}{{\rm FSets}}
\newcommand{\FtAff}{{\rm FtAff}}
\newcommand{\Gal}{{\rm Gal}}
\newcommand{\height}{\textup{ht}}
\newcommand{\Hom}{{\rm Hom}}
\newcommand{\iinf}{\textup{inf}}
\newcommand{\im}{{\rm im}}
\newcommand{\Ker}{{\rm Ker}}
\newcommand{\LL}{\textup{L}}
\newcommand{\Loc}{{\rm Loc}}
\newcommand{\Max}{{\rm Max \ }}
\newcommand{\MIC}{\mbox{MIC}}
\newcommand{\Min}{{\rm Min \ }}
\newcommand{\NN}{\textup{N}}
\newcommand{\Mod}{\text{\sf Mod}}
\newcommand{\Noohi}{\textup{Noohi}}
\newcommand{\perf}{\textup{perf}}
\newcommand{\pet}{{\textup{proét}}}
\newcommand{\Pic}{{\rm Pic}}
\newcommand{\Rep}{\text{\sf Rep}}
\newcommand{\Res}{{\rm Res}}
\newcommand{\rank}{{\rm rank}}
\newcommand{\red}{{\rm red}}
\newcommand{\Sets}{\textup{Sets}}
\newcommand{\Spf}{\textup{Spf}}
\newcommand{\spe}{\textup{sp}}
\newcommand{\str}{\textup{str}}
\newcommand{\strat}{{\rm Str}}
\newcommand{\sym}{\text{Sym}}
\newcommand{\tp}{{\rm top}}
\newcommand{\Tr}{{\rm Tr}}
\newcommand{\trace}{{\rm Tr}}
\newcommand{\vect}{\text{\sf vect}}
\newcommand{\Vect}{\text{\sf Vect}}



 
\address{ 
Jiu-Kang YU\\
     The Chinese University of Hong Kong\\
     Institute of Mathematical Sciencce (IMS)\\    
    Shatin, New Territories\\ Hong Kong }
\email{jkyu@ims.cuhk.edu.hk}

 \address{Lei ZHANG\\
     Sun Yat-Sen University\\ School of Mathematics
     (Zhuhai)\\Zhuhai, Guangdong, P.~R.~China}
\email{cumt559@gmail.com}

\setcounter{section}{0}
\maketitle

\begin{abstract} Let
    $X$ be a connected scheme locally of finite type over $\C$, and let $X^\an$
    be its associated analytic space. In
    this paper, we provide a comparison map from the topological
fundamental group of $X^\an$
to the pro-étale fundamental group of $X$.\end{abstract}

\section*{Introduction}
Let $X$ be a connected, locally path-connected and semi-locally simply connected topological space, and let $x_0 \in X$ be a point in $X$. Then
$\pi^{\tp}_1(X,x_0)$ classifies all the covering spaces of $X$, that is,
there is an equivalence
\[ 
    \fbox{
  \begin{tabular}{ccc}
            &\text{the category of}&\\
      &\text{ covering spaces of $X$}&
  \end{tabular}
}
\Longleftrightarrow
\fbox{
  \begin{tabular}{ccc}
            &\text{the category of sets}&\\
            &\text{with a $\pi_1^\tp(X,x_0)$-action}&
  \end{tabular}
}
\]

Inspired by the analogy between the above classification and the classical
Galois theory, A.~Grothendieck constructed in SGA1 \textit{the étale fundamental group} $\pi_1^\et(X,x_0)$  for each connected scheme $X$ and a
geometric point $x_0\in X$. The fundamental group $\pi_1^\et(X,x_0)$
classifies all the finite étale covers of $X$, \emph{i.e.} there is an
equivalence 
\[ 
    \fbox{
  \begin{tabular}{ccc}
            &\text{the category of}&\\
            &\text{ finite étale}&\\
            &\text{covers of $X$}&
  \end{tabular}
}
\Longleftrightarrow
\fbox{
  \begin{tabular}{ccc}
            &\text{the category of finite}&\\
            &\text{sets with a continuous}&\\
            &\text{$\pi_1^\et(X,x_0)$-action}&
  \end{tabular}
}
\]
A.~Grothendieck also showed that if $X$ is a scheme locally of finite type over
$\C$, then
$\pi_1^\et(X,x_0)$ is the profinite completion of $\pi_1^\tp(X^\an,x_0)$,
where $X^\an$ is the complex analytic space associated with $X$. There are
two main ingredients behind this comparison:
\begin{itemize}
\item If $p\colon Y\to X$ is a morphism of $\C$-schemes
    locally of finite type, then the analytification $p^\an$ is a finite covering
    space if and only if $p$ is a finite étale cover
    (cf.~\cite[Exposé XII, Proposition 3.1 (iii), Proposition 3.2 (vi)]{SGA1});
\item The \textit{Riemann Existence Theorem}
(\cite[Exposé XII, Théorème 5.1, p.~251]{SGA1}) which states that 
the analytification functor $(-)^\an$ from the category of schemes locally of
finite type over $X$ to the category of analytic spaces over $X^\an$ induces an equivalence between the finite
étale covers of $X$ and the finite covering spaces of $X^\an$.
\end{itemize}

In \cite{BS15}, B.~Bhatt and P.~Scholze introduced the notion of \emph{the pro-étale
fundamental group} $\pi^\pet(X,x_0)$, a
topological group
which classifies the {\it geometric covers} of \(X\) (cf.~\cite[Definition
7.3.1]{BS15}).  The geometric covers are locally constant sheaves in the
pro-étale topology, in particular, they include the finite \'etale
covers.  Therefore, \(\pi_1^\pet(X,x_0)\) refines Grothendieck's \'etale
fundamental group \(\pi_1^\et(X,x_0)\), which classifies the finite
\'etale covers.  In fact, there is a natural morphism
\(\pi_1^\pet(X,x_0)\to \pi_1^\et(X,x_0)\) which makes
\(\pi_1^\et(X,x_0)\) the
profinite completion of \(\pi_1^\pet(X,x_0)\). However, there has been no comparison
between $\pi^\tp$ and $\pi^\pet$.
The main purpose of this paper to achieve this comparison. 

Let $X$ be a scheme locally of finite type over $\C$. Recall that if $p'\colon
Y'\to X^\an$ is a map of analytic spaces, then we say $p'$ is
\emph{algebraizable} if there is a map $p\colon Y\to X$ of $\C$-schemes locally of
finite type whose analytification
(cf.~\cite[Théorème et définition 1.1]{SGA1}) $p^\an$ is $p'$.

\begin{thmI} {\rm(cf.~\ref{geom.covers are covering spaces}, \ref{covering spaces
    are geometric covers} and \ref{full faithfulness})} Let $X$ be a scheme locally of finite type over $\C$.
    \begin{enumerate}
        \item If $p\colon Y\to X$ is a morphism of $\C$-schemes
    locally of finite type, then its analytification $p^\an$ is a covering
    space if and only if $p$ is a geometric cover;
\item The analytification functor $(-)^\an$ from the category of schemes locally of
finite type over $X$ to the category of analytic spaces over $X^\an$ induces an
equivalence between the geometric covers of $X$ and the algebraizable covering spaces of $X^\an$.
\end{enumerate}
Moreover, the full subcategory of algebraizable covering spaces is stable
under subobjects and quotients.
\end{thmI}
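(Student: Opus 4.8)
The plan is to prove the three assertions in turn; the second and third then reduce largely to the first. Once part~(1) is known, essential surjectivity of $(-)^\an$ onto the algebraizable covering spaces is immediate from the definition of ``algebraizable'', so part~(2) amounts to the full faithfulness of $(-)^\an$ on the category $\cov_X$ of geometric covers; and the ``Moreover'' will follow from parts~(1)--(2) together with the fact --- available since, over each connected component, $\cov_X$ is equivalent to a category of $\pi_1^\pet(X,x_0)$-sets --- that $\cov_X$ is stable under subobjects and quotients.

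For part~(1) I would begin from \cite[Exp.~XII]{SGA1}: a morphism of $\C$-schemes locally of finite type is étale if and only if its analytification is a local isomorphism. Since ``geometric cover'' and ``covering space'' are both ``a local isomorphism plus a completeness condition'', the real content is to match the valuative criterion of properness for $p$ with the covering-space property of the local isomorphism $p^\an$, and I would set up two reductions. First, $p$ is separated if and only if $p^\an$ is: both are unramified, so $\Delta_p$ is an open immersion, and closedness of $\Delta_p(Y)$ in $Y\times_X Y$ can be tested after analytification; this simultaneously disposes of the \emph{uniqueness} half of the valuative criterion and of the separatedness inherent in a covering space. Second, since $p$ is locally of finite presentation and $X$ is locally Noetherian, a limit/approximation argument should reduce the valuative criterion to discrete valuation rings essentially of finite type over $\C$ (local rings at smooth points of complex algebraic curves) --- i.e.\ to extending across the puncture a morphism $C\setminus\{c\}\to Y$ over a given $C\to X$, with $c\in C$ a smooth point --- and analytically such a curve germ is a small disc $\mathbb{D}$ with puncture, so the valuative criterion becomes: any lift to $Y^\an$ of the restriction to $\mathbb{D}\setminus\{c\}$ of a holomorphic disc in $X^\an$ extends over $\mathbb{D}$. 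Granting these: for ($\Leftarrow$), a geometric cover $p$ analytifies to a separated local homeomorphism onto the locally contractible space $X^\an$, so by the standard topological criterion --- a separated local homeomorphism onto a locally contractible space is a covering space if and only if arcs lift --- it is a covering space, the arc-lifting being the \emph{existence} half of the valuative criterion in the disc reformulation; for ($\Rightarrow$), a covering space $p^\an$ is a separated local isomorphism, so $p$ is separated and étale, and the existence half of the valuative criterion holds because, after the reduction, it asks exactly for the disc-lifting that a covering space enjoys.

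For part~(2), full faithfulness: a morphism $Y_1\to Y_2$ over $X$ in $\cov_X$ is the same as a section of the projection $Y_1\times_X Y_2\to Y_1$ (itself a geometric cover, as a base change of $Y_2\to X$), and since this projection is separated such a section is the inclusion of a clopen subscheme of $Y_1\times_X Y_2$ mapping isomorphically onto $Y_1$; over a connected component of $Y_1$ it is a connected component of $Y_1\times_X Y_2$. The same description holds analytically via $(Y_1\times_X Y_2)^\an=Y_1^\an\times_{X^\an}Y_2^\an$, and since analytification induces a bijection on connected components of $\C$-schemes locally of finite type and reflects their isomorphisms, it identifies the two sets of distinguished components, hence $\mathrm{Hom}_X(Y_1,Y_2)$ with $\mathrm{Hom}_{X^\an}(Y_1^\an,Y_2^\an)$ (reducing if needed to $X$, $Y_1$ connected). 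Finally, for the ``Moreover'': a subobject of an algebraizable covering space $Y^\an$ in $\cov(X^\an)$ is a union of connected components of $Y^\an$, hence equals $(Y')^\an$ for the corresponding clopen subscheme $Y'\subseteq Y$, which is again a geometric cover; and a quotient $Y^\an\to E'$ has kernel pair $R$, a subobject of the algebraizable covering space $(Y\times_X Y)^\an$ and so algebraizable by the previous point, say $R=(R_0)^\an$ for an étale equivalence relation $R_0$ on $Y$ in $\cov_X$, whose quotient $Y/R_0$ exists in $\cov_X$; then $(Y/R_0)^\an$ --- a covering space by part~(1), with kernel pair $R$ over $Y^\an$ --- is canonically identified with $E'$ (both being the coequalizer of $R$), so $E'$ is algebraizable.

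The hard part is expected to be part~(1): pinning down a faithful, two-way dictionary between the scheme-theoretic valuative criterion of properness and the analytic covering-space property. The delicate ingredients are the precise reductions of the valuative criterion --- to one-dimensional test objects (curve germs, hence discs) even though $p$ need not be quasi-compact, a geometric cover being possibly infinitely sheeted, and, on the analytic side, the passage between algebraic curve germs and holomorphic discs --- together with the topological input that a \emph{separated} local homeomorphism onto a locally contractible space is a covering space once arcs lift. With part~(1) in hand, part~(2) and the ``Moreover'' are comparatively formal, resting only on the good behaviour of $\pi_0$ and of isomorphisms under analytification and on the stability of $\cov_X$ under subobjects and quotients.
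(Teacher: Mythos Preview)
Your treatment of part~(2) and of the ``Moreover'' clause is correct and in fact somewhat cleaner than the paper's: identifying subobjects in $\cov(X^\an)$ with unions of connected components and then invoking the bijection on $\pi_0$ under analytification avoids the normalization and Riemann Existence arguments that the paper uses in Theorem~\ref{full faithfulness}. Likewise your graph/section argument for full faithfulness is a perfectly good alternative to the paper's appeal to the infinite Galois formalism.

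The difficulty is in part~(1), and specifically in the implication ``$p$ geometric cover $\Rightarrow$ $p^\an$ covering space''. Your plan is to deduce topological arc-lifting for $p^\an$ from the valuative criterion, after reducing the latter to curve germs and reinterpreting those as holomorphic discs. But this dictionary only runs one way. The valuative criterion, even in your disc reformulation, says that an \emph{algebraic} (or at best holomorphic) punctured-disc lift extends across the puncture; it says nothing about lifting an arbitrary continuous path $[0,1]\to X^\an$, and such a path need not lie in, or be homotopic rel endpoints to, the image of any algebraic curve germ. So the step ``the arc-lifting being the existence half of the valuative criterion in the disc reformulation'' does not go through, and with it the appeal to the criterion ``separated local homeomorphism with arc-lifting onto a locally contractible base is a covering map'' collapses. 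The paper circumvents this entirely: it never tries to lift topological arcs. Instead it uses the structural input \cite[7.4.10]{BS15} that over a \emph{normal} base a geometric cover is a disjoint union of finite \'etale covers (hence its analytification is trivially a covering space), and then for general $X$ runs an induction on $\dim X$ via the normalization $\tilde X\to X$ and the singular locus $Z$, gluing local trivializations by an \'etale-descent lemma for closed covers (Lemmas~\ref{closed van kampen} and~\ref{closed van kampen v2}).

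For the converse direction your idea is closer to workable, but note that the paper again takes a different and more algebraic route: rather than reducing to curve germs and analytic disc-extension (which would still leave you to argue that the holomorphic extension over the puncture is algebraic), it reduces the valuative-criterion diagram to the case where $X$ is normal integral and the connected component of $Y$ containing the given generic point has the same function field as $X$; then Zariski's Main Theorem forces $p$ restricted to that component to be an open immersion, and connectedness of $X^\an$ together with surjectivity of the covering map $p^\an$ forces it to be an isomorphism. If you want to keep your approach for this half, you should make explicit why the analytic lift $\mathbb D\to Y^\an$ actually yields a morphism $\Spec R\to Y$ of schemes.
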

Therefore, we obtain a natural comparison map.
\begin{thmII}\label{thmII}{\rm(cf.~\ref{comparison map})} Let $X$ be a connected scheme
    locally of finite type over $\C$, and let $x_0\in X$ be a geometric
    point. Then there is a natural comparison
    map
    \[\pi_1^\tp(X^\an,x_0)\longrightarrow\pi_1^\pet(X,x_0)\] 
whose image is dense. Moreover, there is no nontrivial continuous
homomorphism from $\pi_1^\pet(X,x_0)$ to a discrete group $G$ whose
restriction to $\pi_1^\tp(X^\an,x_0)$ is trivial.
\end{thmII}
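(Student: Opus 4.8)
The plan is to realise the comparison map as the homomorphism of ``fundamental groups'' induced by the analytification functor on covers, using Theorem~I, and then to read off both the density statement and the last assertion from elementary covering‑space theory together with the formalism of Noohi groups from \cite{BS15}.

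First I would construct the map. Regard the geometric point $x_0$ as a $\C$-point of $X$, hence also a point of $X^\an$. By Theorem~I(1) the functor $(-)^\an$ carries geometric covers of $X$ to covering spaces of $X^\an$, and the canonical bijection $Y_{x_0}=(Y^\an)_{x_0}$ identifies, functorially, the fibre functor $Y\mapsto Y_{x_0}$ on geometric covers of $X$ with the pullback along $(-)^\an$ of the fibre functor $Y'\mapsto Y'_{x_0}$ on covering spaces of $X^\an$. Since $X^\an$ is locally contractible (in particular connected, locally path‑connected and semilocally simply connected), covering‑space theory identifies the category of covering spaces of $X^\an$ with $\pi_1^\tp(X^\an,x_0)$-sets (discrete topology), and $\pi_1^\tp(X^\an,x_0)$ with the automorphism group of that fibre functor; by definition $\pi_1^\pet(X,x_0)$ is the automorphism group of the fibre functor on geometric covers, with its Noohi topology. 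Whiskering an automorphism of the topological fibre functor with $(-)^\an$ and transporting it through the identification above produces a homomorphism
\[
  \phi\colon\pi_1^\tp(X^\an,x_0)\longrightarrow\pi_1^\pet(X,x_0),
\]
and $\phi$ is continuous because the preimage of a basic open subgroup $\operatorname{Stab}_{\pi_1^\pet}(\xi)$, with $\xi\in Y_{x_0}=(Y^\an)_{x_0}$ and $Y$ a geometric cover, is the $\pi_1^\tp$-stabiliser of $\xi$ for the topological monodromy, which is (trivially) open. Functoriality in $(X,x_0)$ and compatibility with the canonical maps to $\pi_1^\et(X,x_0)$ follow from the analogous facts for $(-)^\an$ on finite covers (the Riemann existence theorem), and this is what makes $\phi$ the \emph{natural} comparison map.

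Next I would prove density. In the infinite Galois formalism of \cite{BS15}, every open subgroup $U\le\pi_1^\pet(X,x_0)$ is the stabiliser of a point $\tilde x_0\in Y_{x_0}$ for some \emph{connected} geometric cover $Y\to X$, so that $\pi_1^\pet(X,x_0)$ acts transitively on $Y_{x_0}\cong\pi_1^\pet(X,x_0)/U$. Now $Y$ is a connected scheme locally of finite type over $\C$, hence $Y^\an$ is connected (this is standard, cf.\ \cite[Exposé~XII]{SGA1}), and $Y^\an\to X^\an$ is a covering space by Theorem~I(1); being a connected covering space of the connected, locally path‑connected space $X^\an$, it has transitive topological monodromy. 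Under the identification $Y_{x_0}=(Y^\an)_{x_0}$ this monodromy action is exactly the restriction along $\phi$ of the $\pi_1^\pet(X,x_0)$-action, so $\phi(\pi_1^\tp(X^\an,x_0))$ already acts transitively on $\pi_1^\pet(X,x_0)/U$, i.e.\ $\phi(\pi_1^\tp(X^\an,x_0))\cdot U=\pi_1^\pet(X,x_0)$. Letting $U$ run over all open subgroups (which form a neighbourhood basis of the identity in the Noohi topology) yields $\overline{\phi(\pi_1^\tp(X^\an,x_0))}=\pi_1^\pet(X,x_0)$.

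The final assertion is then immediate: if $\psi\colon\pi_1^\pet(X,x_0)\to G$ is continuous with $G$ discrete and $\psi\circ\phi$ trivial, then $\ker\psi$ is open, hence closed, and contains the dense subgroup $\phi(\pi_1^\tp(X^\an,x_0))$, so $\ker\psi=\pi_1^\pet(X,x_0)$ and $\psi$ is trivial. The only points that will genuinely require care are the bookkeeping in the construction of $\phi$ — matching the fibre functors and checking that $\phi$ lands \emph{continuously} in the Noohi group $\pi_1^\pet(X,x_0)$ — and invoking the connectedness of $Y^\an$ in the density argument; both are routine. The substantive content has already been absorbed into Theorem~I: once the map exists, density needs only that connected geometric covers analytify to connected covering spaces, and the last statement is pure point‑set topology.
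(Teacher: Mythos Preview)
Your argument is correct and, for the construction of $\phi$ and the density of its image, coincides with the paper's: the paper simply cites \cite[Proposition~2.64]{Lara19} together with Theorem~\ref{full faithfulness}, and that proposition packages precisely the ``connected geometric covers analytify to connected covering spaces, hence $\phi(\pi_1^\tp)$ acts transitively on every $\pi_1^\pet/U$'' argument you write out by hand.

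For the final ``Moreover'' you take a shorter route than the paper does. You deduce it from density alone via the point-set observation that $\ker\psi$ is open, hence closed, hence all of $\pi_1^\pet(X,x_0)$ once it contains a dense subgroup. The paper instead proves the genuinely stronger statement that any continuous \emph{surjection} $u\colon\pi_1^\pet(X,x_0)\twoheadrightarrow G$ onto a discrete group remains surjective after precomposing with $c_X^{\tp\to\pet}$; this does not follow from density, and the paper obtains it from the stability of algebraizable covering spaces under subobjects (the ``Moreover'' of Theorem~I, i.e.\ Theorem~\ref{full faithfulness}), by arguing that a $\pi_1^\tp$-orbit in $G$ is already a $\pi_1^\pet$-orbit. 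Your argument is perfectly adequate for Theorem~II as stated, but it does not recover this sharper conclusion.
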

Thus, the difference between $\pi^\tp$
and $\pi^\pet$ for complex varieties lies exactly at those covering spaces,
such as the exponential map
$\G_{a}^\an\to \G_{m}^\an$, which are not algebraizable.

It is remarkable that unlike the étale fundamental group, which is the profinite
completion of the topological fundamental group, the pro-étale
fundamental group is not determined by the topological fundamental group. For
example, let $X$ be the space $\P^1_\C$ with 0 and $\infty$ identified,
then both $\pi_1^\tp(X^\an)$ and $\pi^\tp_1(\G_{m,\C}^\an)$ are $\Z$. However,
$\pi_1^\pet(X)=\Z$ (cf.~\cite[Theorem IV]{LYZ21}) while $\pi_1^\pet(\G_{m,\C})=\hat{\Z}$. Indeed, the
comparison map $\pi^\tp_1(X^\an,x_0)\to \pi^\pet_1(X,x_0)$ is an
isomorphism by Theorem \ref{thmII}, so all the covering spaces of $X^\an$ are
algebraizable.

\subsection*{Notation and conventions.} For analytic spaces (always over \(\mathbb{C}\)), we
follow the convention of \cite[Définition 2.1]{Gro60} and \cite[Expos\'e XII]{SGA1}, allowing
non-separated analytic spaces.  For topological spaces, we follow the convention
of
\cite[\href{https://stacks.math.columbia.edu/tag/004C}{004C}]{stacks-project}.  In particular, a proper map is
assumed to be separated \cite[\href{https://stacks.math.columbia.edu/tag/005M}{005M}]{stacks-project}.

Let \(p\colon Y\to X\) be a map of topological spaces (resp.~analytic
spaces).  The map $p$ is called \emph{étale} if it is a
local isomorphism.   We denote by $\eet(X)$ the category of étale maps
with target $X$.   We denote by \(\cov(X)\) the category of covering spaces
over \(X\) (resp.~the full subcategory of $\eet(X)$ consisting of maps $p\colon
Y\to X$ which are covering maps for the underlying topological spaces). If $X$ is a
scheme, then $\Cov(X)$ denotes the category of geometric covers of
$X$.

A covering map \(p:Y\to X\) is called a {\it trivial covering space}
if it is isomorphic (in \(\cov(X)\)) to \(\coprod_J X \to X\) for some
set \(J\), where \(\coprod_J X\) denote the disjoint union of copies
of \(X\) indexed by \(J\).  Some authors would say that \(X\) is
evenly covered by \(p\).

\section{Geometric Covers are Covering Spaces}

\begin{lem}\label{etale analytique space and etale top space}
    \begin{enumerate}
    \item If $X$ is an analytic space, then the
    forgetful functor from the category of étale analytic spaces over $X$ to the category
    of topological spaces over $X$ is fully faithful;
    \item If $u\colon X'\to X$ is a map of analytic spaces such that the
        induced map on the underlying topological spaces is a
        homoemorphism, then the
        pullback along $u$ induces an equivalence
        $\eet(X)\xrightarrow{\simeq}\eet(X')$.
    \end{enumerate}
\end{lem}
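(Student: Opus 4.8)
The plan is to prove (1) directly from the fact that an étale map is a local isomorphism, and then to deduce (2) formally by applying (1) to both $X$ and $X'$. For (1), the point to isolate is that if $q\colon Z\to X$ is étale, then the comparison map $q^{-1}\mathcal{O}_X\to\mathcal{O}_Z$ of structure sheaves is an isomorphism, which may be checked locally on $Z$, where $q$ restricts to an isomorphism onto an open subspace of $X$. Faithfulness of the forgetful functor is then immediate: if $f,g\colon Y\to Z$ are morphisms over $X$ sharing a common underlying continuous map $h$, then the relation $p=q\circ f=q\circ g$ (with $p\colon Y\to X$) forces both $f^{\#}$ and $g^{\#}$ to equal $p^{\#}\circ(h^{-1}q^{\#})^{-1}\colon h^{-1}\mathcal{O}_Z\to\mathcal{O}_Y$. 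For fullness it is cleanest to argue geometrically: that a continuous $X$-map $f\colon Y\to Z$ is holomorphic is local on $Y$, and near a point $y$ one may shrink to an open $U\ni y$ with $p|_U$ an isomorphism onto its image, and then further so that $f(U)$ lies in an open $V'\subseteq Z$ with $q|_{V'}$ an isomorphism onto an open subspace of $X$; on $U$ one then has $f=(q|_{V'})^{-1}\circ p|_U$, a composite of morphisms of analytic spaces.

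For (2), consider first full faithfulness. If $Y\to X$ is étale then, étale-locally on $Y$ (where $Y$ is an open subspace of $X$), the projection $Y\times_X X'\to Y$ is the base change of $u$ along an open embedding, hence a homeomorphism on underlying spaces since $|u|$ is; gluing these, $Y\times_X X'\to Y$ is a homeomorphism on underlying spaces for every étale $Y\to X$. It follows that the square with horizontal arrows $u^{*}\colon\eet(X)\to\eet(X')$ and the pullback $|u|^{*}\colon\eet(|X|)\to\eet(|X'|)$ along the homeomorphism $|u|$, and with vertical arrows the forgetful functors to topological spaces (fully faithful by (1)), commutes up to natural isomorphism. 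Since $|u|^{*}$ is an equivalence and the vertical arrows are fully faithful, $u^{*}$ is fully faithful.

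The one substantial point is essential surjectivity of $u^{*}$. Given an étale map $V\to X'$, choose an open cover $V=\bigcup_i V_i$ with each $V_i$ mapping isomorphically onto an open subspace $W_i'\subseteq X'$, and record the transition data: open subspaces $W_{ij}'\subseteq W_i'$ and isomorphisms $\phi_{ij}\colon W_{ij}'\xrightarrow{\simeq}W_{ji}'$ over $X'$ with $\phi_{ii}=\mathrm{id}$ and the cocycle identity. Since $|u|$ is a homeomorphism, each $W_i'$ (resp.\ $W_{ij}'$) is of the form $W_i\times_X X'$ (resp.\ $W_{ij}\times_X X'$) for a unique open subspace $W_i\subseteq X$ (resp.\ $W_{ij}\subseteq W_i$). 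By (1) applied to $X'$ the $\phi_{ij}$ amount to continuous maps over $|X'|=|X|$, and by (1) applied to $X$ these lift uniquely to morphisms $\psi_{ij}\colon W_{ij}\to W_{ji}$ over $X$; that each $\psi_{ij}$ is an isomorphism and that the family $(\psi_{ij})$ satisfies the cocycle identity follow, via the faithfulness in (1), from the corresponding statements for the underlying continuous maps, which are those of the $\phi_{ij}$. Gluing the $W_i$ along the $W_{ij}$ via the $\psi_{ij}$ then produces an analytic space $Y$ together with a map $Y\to X$ that is étale, because being a local isomorphism is local on the source and on each $W_i\subseteq Y$ the map $Y\to X$ is the open embedding $W_i\hookrightarrow X$. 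Finally, base change along $u$ commutes with this gluing, and $\psi_{ij}\times_X X'$ coincides with $\phi_{ij}$ by the faithfulness in (1) (both have the same underlying map over $|X'|$), so $Y\times_X X'$ is glued from the $W_i'$ along the $\phi_{ij}$, i.e.\ $Y\times_X X'\cong V$ over $X'$. I expect the only genuine obstacle to lie here, in checking that the transition isomorphisms transported across $|u|$ remain a cocycle over $X$ and base-change back to the original ones; but, as indicated, (1) reduces each such check to a transparent statement about continuous maps over the common underlying space $|X|=|X'|$.
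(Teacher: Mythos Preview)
Your proof is correct and follows essentially the same approach as the paper's: for (1) you reduce to the local situation where the \'etale maps are isomorphisms onto opens (packaged via the isomorphism $q^{-1}\mathcal{O}_X\xrightarrow{\sim}\mathcal{O}_Z$), and for (2) you deduce full faithfulness from (1) and then handle essential surjectivity by an explicit gluing, which is exactly the content of the paper's remark that ``by full faithfulness the existence of $f$ can be proved locally on $Y'$.'' The only difference is that you spell out the commutative-square argument for full faithfulness in (2) and the cocycle verification for the gluing, whereas the paper leaves these implicit.
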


\begin{proof} Let $f_1\colon (Y_1,\sO_{Y_1})\to (X,\sO_X)$ and $f_2\colon
    (Y_2,\sO_{Y_2})\to (X,\sO_X)$ be two étale maps of analytic
    spaces. We want to show that if $g$ is a morphism of topological
    spaces $Y_1\to Y_2$ over $X$, then there
    exists a unique map of sheaves $g^{-1}\sO_{Y_2}\to \sO_{Y_1}$   making the
    following diagram 
    \[
        \xymatrix{
            g^{-1}\sO_{Y_2}\ar@{..>}[rr]&&\sO_{Y_1}\\
                                        &f_1^{-1}\sO_{X}\ar[ul]^{g^{-1}(f_2^{\#})}\ar[ur]_{f_1^{\#}}
        }
    \]
    commutative \emph{i.e.} making $g$
    a map of $X$-analytic spaces $f_1\to f_2$. The uniqueness is local on
    $Y_1$.
    Since $f_1,f_2$ are local isomorphisms, to prove the uniqueness we may assume that they are
    isomorphisms. Then the situation is clear. Having the uniqueness,
    the existence becomes local on $Y_1$, thus assuming that  $f_1, f_2$ are
    isomorphisms again we finish the proof of (1).

    To prove part (2), one first notices that (1) already provides the full
    faithfulness, so we only have to show the essential surjectivity.
    Suppose that $f'\colon (Y',\sO_{Y'})\to (X',\sO_{X'})$ is an étale map.
    We want to find an étale map $f\colon (Y,\sO_Y)\to(X,\sO_X)$ whose
    pullback along $u$ is $f'$. It follows from the full faithfulness
    of the pullback that the existence of $f$ can be proved locally on $Y'$.
    Thus we may assume that $f'$ is an isomorphism, in which case the
    assertion is clear.
\end{proof}
    
\begin{rmk}\label{analytic space vs topological space} \begin{enumerate}\item An étale map of analytic spaces induces an étale map of the
            underlying topological spaces, but not vise versa.
    \item It follows from \ref{etale analytique space and etale top space} (1)
that if $X$ is an analytic spaces, then any map in $\Cov(X)$ is locally on
$X$ a trivial covering space -- as an analytic space!  \end{enumerate}
\end{rmk}

\begin{lem}\label{what proper maps are good for} Let $f\colon \tilde{X}\to X$ be a closed map of topological spaces, and let
    $x\in X$ be a point. If
    $V\subseteq \tilde{X}$ is an open subset containing $f^{-1}(x)$, then there exists
    an open neighborhood $U\subseteq X$ of $x$ such that $f^{-1}(U)\subseteq
    V$. 
\end{lem}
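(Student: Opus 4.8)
The plan is to prove Lemma~\ref{what proper maps are good for} by a standard closed-map argument working with the complement. First I would set $Z := \tilde X \setminus V$, which is a closed subset of $\tilde X$ disjoint from the fiber $f^{-1}(x)$. Since $f$ is a closed map, the image $f(Z)$ is closed in $X$; and because $Z \cap f^{-1}(x) = \emptyset$, we have $x \notin f(Z)$. Hence $U := X \setminus f(Z)$ is an open neighborhood of $x$.

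Next I would verify that this $U$ has the required property, namely $f^{-1}(U) \subseteq V$. Indeed, if $y \in f^{-1}(U)$ then $f(y) \in U = X \setminus f(Z)$, so $f(y) \notin f(Z)$, which forces $y \notin Z$ (if $y$ were in $Z$ then $f(y)$ would lie in $f(Z)$), i.e.\ $y \in \tilde X \setminus Z = V$. This completes the argument.

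There is essentially no obstacle here: the only subtlety worth a word is that one should not confuse $f^{-1}(f(Z))$ with $Z$ — the containment $f^{-1}(U) \subseteq V$ follows not from $f^{-1}(f(Z)) = Z$ (which can fail) but from the weaker and always-true fact that $f^{-1}(X \setminus f(Z)) \subseteq \tilde X \setminus Z$. One also notes that nonemptiness of $f^{-1}(x)$ is not needed; if the fiber is empty the statement still holds with the same proof (then $x \notin f(\tilde X)$ already suffices, but taking $Z = \tilde X$ recovers it uniformly).
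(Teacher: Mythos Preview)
Your proof is correct and is exactly the paper's argument: the paper simply sets $U := X \setminus f(\tilde X \setminus V)$, which is your $X \setminus f(Z)$, and omits the routine verification you spelled out.
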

\begin{proof}
    Set $U\coloneq X\setminus{f(\tilde{X}\setminus{V})}$.
\end{proof}


 \begin{lem}\label{locally trivial}
     Let $X$ be a scheme locally of finite type over $\C$,
and let $p\colon Y\to X$ be a geometric cover which is a disjoint union of finite étale covers. Then for each $x\in X^\an$ there is an open
neighborhood $U\subseteq X^\an$ such that $p^\an|_{U}$ is a
trivial covering space.
\end{lem}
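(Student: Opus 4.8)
The plan is to reduce to the case of a single finite étale cover and then patch. Write $p\colon Y\to X$ as $\coprod_{i\in I} Y_i \to X$ with each $p_i\colon Y_i\to X$ a finite étale cover. Fix $x\in X^\an$. By Grothendieck's comparison (the Riemann Existence Theorem, or more elementarily \cite[Exposé XII, Proposition 3.2 (vi)]{SGA1}), each $p_i^\an\colon Y_i^\an\to X^\an$ is a finite covering space of analytic spaces, hence a fortiori a covering space of the underlying topological spaces. In particular $p_i^\an$ is a closed (indeed proper) map and the fiber $(p_i^\an)^{-1}(x)$ is finite. So for a single $i$ one can certainly find an open neighborhood $U_i\ni x$ over which $p_i^\an$ is a trivial covering space. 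The content of the lemma is that a \emph{single} $U$ works simultaneously for all $i\in I$, even though $I$ may be infinite.

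First I would exploit that $p$ itself, being a geometric cover, is separated: the morphism $Y\to X$ is quasi-compact and separated, so $Y^\an\to X^\an$ is separated, and by \ref{etale analytique space and etale top space}(1) together with \ref{analytic space vs topological space}(2) the map $p^\an$ is, locally on $X^\an$, a trivial covering space of analytic spaces. Concretely, choose an open $W\ni x$ in $X^\an$ over which $(p^\an)^{-1}(W)\cong \coprod_J W$ for some index set $J$; shrinking $W$ we may assume $W$ is connected. Now the key step: on the connected set $W$, each $Y_i^\an$ meets $(p^\an)^{-1}(W)=\coprod_J W$ in a union of some of the sheets, and since $p_i^\an$ is a \emph{finite} covering map, $Y_i^\an$ contains only finitely many of these sheets over $W$, and they are in bijection with $(p_i^\an)^{-1}(x)$. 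Hence over $W$ the decomposition $\coprod_i Y_i^\an$ already realizes the sheet-decomposition $\coprod_J W$ refined compatibly, and $p^\an|_W$ is a trivial covering space. I would take $U=W$.

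The one point that needs care — and which I expect to be the main obstacle — is verifying that $(p^\an)^{-1}(W)$ is a \emph{trivial} covering space of $W$ rather than merely an étale, fiberwise-discrete analytic space: that is, promoting ``locally trivial as an analytic space'' (Remark \ref{analytic space vs topological space}(2)) to an honest product over a small enough $W$, and checking that the sheets can be taken to be the connected components of $(p^\an)^{-1}(W)$ when $W$ is connected. This is where one uses that $p$ is separated (so distinct sheets do not ``collide'') together with the fact that a geometric cover is an étale map of analytic spaces with \emph{discrete} fibers; then standard covering-space theory over the connected, locally path-connected base $W$ gives the product decomposition. Once that is in hand, finiteness of each $p_i^\an$ is used only to know each $Y_i^\an$ uses finitely many sheets, which is automatic and requires no further shrinking. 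Assembling these observations yields the claim with $U = W$.
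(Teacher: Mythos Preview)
You correctly isolate the crux: each $p_i^\an$ is a finite covering space, and the problem is to find a \emph{single} neighbourhood trivialising all of them at once when $I$ is infinite. But your proposed resolution is circular.

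The sentence ``choose an open $W\ni x$ in $X^\an$ over which $(p^\an)^{-1}(W)\cong\coprod_J W$'' assumes the conclusion. You justify it via Remark~\ref{analytic space vs topological space}(2), but that remark applies only to maps already known to lie in $\Cov(X^\an)$: it upgrades \emph{topological} local triviality to \emph{analytic} local triviality, it does not produce a trivialising neighbourhood from nothing. (Your side claim that $Y\to X$ is quasi-compact is also false once $I$ is infinite.) Your fallback in the last paragraph---separated, \'etale, discrete fibres over a connected, locally path-connected base---does not force a covering map either. Delete a single point from the source of the squaring map $\mathbb{C}^*\to\mathbb{C}^*$: the resulting map is separated, \'etale, surjective with discrete fibres over a connected, locally path-connected (even locally simply connected) base, yet it is not a covering near the image of the deleted point.

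The missing ingredient, and the paper's actual argument, is that $X^\an$ is locally \emph{simply connected}: it is locally a CW-complex by \L{}ojasiewicz's triangulation theorem, hence locally contractible. Choose $U\ni x$ simply connected; then each finite covering $p_i^\an|_U$ is trivial by elementary covering-space theory, and therefore so is the disjoint union $p^\an|_U=\coprod_i p_i^\an|_U$. This is precisely where the hypothesis ``disjoint union of finite \'etale covers'' is used---each piece is a genuine covering space, and a simply connected $U$ trivialises all of them simultaneously regardless of the cardinality of $I$. Remark~\ref{analytic space vs topological space}(2) then enters only at the end, to record that the trivialisation holds as analytic spaces.
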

\begin{proof}
    An analytic space is locally simply connected because it is locally a
    CW-complex (cf.~\cite[Theorem 1]{Loj64}) and every CW-complex is locally
    contractible (cf.~\cite[A.4, p. 522]{Hat02}). Thus we can choose $U$ to be a simply
    connected open neighborhood of $x$. Then the claim follows from
    Remark \ref{analytic space vs topological space} (2).
\end{proof}

 
\begin{lem}\label{closed van kampen}
Let $X$ be a topological space,  and let
  \(Z_1, Z_2\) be closed subspaces of \(X\) such that
  \(X=Z_1\cup Z_2\). Then we have an equivalence given by the pullback functor:
  \[\eet(X)\xrightarrow{\ \ \cong\ \ }\eet(Z_1)\times_{\eet(Z)}\eet(Z_2)\]
  where $Z=  Z_1\cap Z_2$.
\end{lem}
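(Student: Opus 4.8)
The plan is to show that the pullback functor $P\colon\eet(X)\to\eet(Z_1)\times_{\eet(Z)}\eet(Z_2)$ is fully faithful and essentially surjective, with quasi-inverse furnished by a gluing construction. The point-set fact used throughout is that, since $Z_1,Z_2$ are closed with $X=Z_1\cup Z_2$, a subset of $X$ is open (resp.\ closed) if and only if its traces on $Z_1$ and on $Z_2$ are; and the same holds for any space over $X$ with respect to the closed cover $\{f^{-1}(Z_1),f^{-1}(Z_2)\}$. Full faithfulness is then formal and uses nothing about étaleness: for $f\colon Y\to X$ and $f'\colon Y'\to X$ in $\eet(X)$, a morphism $Y\to Y'$ over $X$ is precisely a pair of continuous maps $f^{-1}(Z_i)\to f'^{-1}(Z_i)$ over $Z_i$ ($i=1,2$) agreeing over $Z$ — injective because $Y=f^{-1}(Z_1)\cup f^{-1}(Z_2)$, surjective because continuous maps defined on the members of a closed cover and agreeing on the overlap glue to a continuous map.

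For essential surjectivity, given a triple $(f_1\colon Y_1\to Z_1,\ f_2\colon Y_2\to Z_2,\ \varphi\colon f_1^{-1}(Z)\xrightarrow{\cong}f_2^{-1}(Z))$ over $Z$, I form the topological pushout $Y\coloneq Y_1\sqcup_{f_1^{-1}(Z)}Y_2$ along $\varphi$. Since $f_i^{-1}(Z)$ is closed in $Y_i$, the standard fact that a pushout of a closed embedding is a closed embedding gives that $Y_1,Y_2$ are closed subspaces of $Y$ with $Y=Y_1\cup Y_2$ and $Y_1\cap Y_2=f_1^{-1}(Z)$; the $f_i$ then induce $p\colon Y\to X$ with $p^{-1}(Z_i)=Y_i$ carrying the structure map $f_i$, so that $P$ sends $p$ to the given triple up to canonical isomorphism. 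It remains to see that $p$ is a local isomorphism. At $y\in Y$ lying over $x\notin Z$, say $x\in Z_1\setminus Z_2$, the open set $X\setminus Z_2$ has $p$-preimage $Y_1\setminus f_1^{-1}(Z)$, on which $p$ coincides with the étale map $f_1$; the case $x\in Z_2\setminus Z_1$ is symmetric.

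The main obstacle is étaleness of $p$ over a point $x\in Z$, where $y$ is represented by $y_1\in f_1^{-1}(Z)$ and $y_2=\varphi(y_1)$. I choose sections $V_i\subseteq Y_i$ of $f_i$ through $y_i$ with $f_i|_{V_i}$ a homeomorphism onto an open $U_i\subseteq Z_i$. These generally fail to glue: neither $\varphi(V_1\cap f_1^{-1}(Z))=V_2\cap f_2^{-1}(Z)$ nor $U_1\cap Z=U_2\cap Z$ need hold, and over the possibly disconnected set $U_1\cap Z$ one cannot repair this by uniqueness of sections. The remedy is to \emph{transport} one section across $\varphi$ rather than match the two: set $W_2\coloneq\varphi(V_1\cap f_1^{-1}(Z))\cap V_2\cap f_2^{-1}(Z)$, an open neighborhood of $y_2$, and $U_0\coloneq f_2(W_2)$, which is open in $Z$, contains $x$, and lies in both $U_1\cap Z$ and $U_2\cap Z$; then shrink to $U_i'\coloneq U_i\setminus(Z\setminus U_0)$, which is open in $Z_i$ because $Z\setminus U_0$ is closed in $Z$, hence in $X$. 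One checks $U_i'\cap Z=U_0$, so $U\coloneq U_1'\cup U_2'$ is open in $X$, and the shrunk sections $V_i'\coloneq(f_i|_{V_i})^{-1}(U_i')$ satisfy $\varphi(V_1'\cap f_1^{-1}(Z))=W_2=V_2'\cap f_2^{-1}(Z)$, hence glue to a subset $V\subseteq Y$ which is open (its traces on $Y_1,Y_2$ are $V_1',V_2'$) and contains $y$. Finally $p|_V\colon V\to U$ is a continuous bijection restricting to a homeomorphism on each of the closed subsets $V_1'$ and $V_2'$ onto $U_1'$ and $U_2'$ respectively, hence is itself a homeomorphism; thus $p$ is a local isomorphism at $y$, and together with full faithfulness this completes the proof.
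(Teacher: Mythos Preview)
Your proof is correct but proceeds along a genuinely different route from the paper's. The paper does not glue by hand: it sets $Y\coloneqq Z_1\coprod Z_2$, observes that the canonical map $Y\to X$ is proper surjective, and then invokes the theorem (Joyal--Tierney, Vermeulen) that such maps are effective descent morphisms for \'etale spaces; unwinding the descent datum on $Y\times_X Y=Z_1\coprod Z_2\coprod(Z_1\cap Z_2)\coprod(Z_2\cap Z_1)$ identifies the descent category with $\eet(Z_1)\times_{\eet(Z)}\eet(Z_2)$. Your argument, by contrast, is entirely elementary and self-contained: full faithfulness from the closed-cover gluing lemma for continuous maps, and essential surjectivity by an explicit pushout together with a careful shrinking of local sections (transporting one section across $\varphi$ to force agreement over $Z$). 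The paper's approach is shorter and more conceptual, and it sets up exactly the descent formalism reused in the next lemma; your approach avoids any external input and makes the local structure of the glued space completely transparent.
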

\begin{proof}  We remark that since a proper map is separated,
  \(\Delta _f\) is indeed a closed embedding.
  Set $Y\coloneq Z_1\coprod Z_2$.
    By \cite[{}4.1,4.7]{JT94} or \cite[Theorem 5.6]{Ver94} a
    Bourbaki-proper surjective map (simply called a proper map in
    loc.~cit.) such as
    $\phi\colon  Y\arr X$ is an effective étale descent
    map. Thus
    an object in $\eet(X)$ is equivalent to an object in
    $W\in\eet(Y)=\eet(Z_1)\times\eet(Z_2)$ together with an
    isomorphism $\phi\colon p_1^*W\xrightarrow{\cong}p_2^*W$ in $\eet(Y\times_XY)$
    satisfying cocycle conditions, where $p_i$ ($i=1,2$) are the two
    projections of $Y\times_XY$. Since \[Y\times_XY=Z_1\coprod Z_2\coprod
    (Z_1\cap Z_2)\coprod (Z_2\cap Z_1)\] the cocycle condition is equivalent to saying that
    $\phi|_{Z_1}=\id_{W|_{Z_1}}$, $\phi|_{Z_2}=\id_{W|_{Z_2}}$ and 
    \[
        \phi|_{Z_1\cap Z_2}\circ\phi|_{Z_2\cap Z_1}=\id_{W|_{Z}}.
    \]
    In other words, the pair $(W,\phi)$ is nothing but an object in
    $\eet(Z_1)\times_{\eet(Z)}\eet(Z_2)$.
\end{proof}

\begin{lem} \label{closed van kampen v2}
Let $X$ be a topological space. Let $Z\subseteq X$ be a closed subspace,
and suppose we have a proper surjective map
\[\tilde{X}\xrightarrow{\hspace{5pt}f\hspace{5pt}}X\]
Denote $f^{-1}(Z)$ by \(\tilde Z\). Suppose that the union
of  the images
of the two closed subspaces
\[
  \Delta_f:\tilde X\to \tilde X\times_X\tilde X,\quad
  \tilde Z\times_Z \tilde Z \to \tilde X \times_X  \tilde X
\]
is \(\tilde X\times _X \tilde X\).
Then the pullback functor induces an equivalence of categories:
\[\eet(X)\xrightarrow{\ \ \cong\ \ }\eet(\tilde{X})\times_{\eet(\tilde
Z)}\eet(Z).\]
\end{lem}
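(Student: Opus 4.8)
The plan is to deduce Lemma \ref{closed van kampen v2} from Lemma \ref{closed van kampen} by an effective-descent argument, exactly parallel to the proof just given, but now keeping track of the more delicate decomposition of the fiber product $\tilde X\times_X\tilde X$. First I would invoke the same input as before: since $f\colon\tilde X\to X$ is proper surjective, hence a Bourbaki-proper surjective map, it is an effective étale descent map (\cite[4.1, 4.7]{JT94} or \cite[Theorem 5.6]{Ver94}). Therefore $\eet(X)$ is equivalent to the category of pairs $(W,\phi)$ where $W\in\eet(\tilde X)$ and $\phi\colon p_1^*W\xrightarrow{\cong}p_2^*W$ is an isomorphism in $\eet(\tilde X\times_X\tilde X)$ satisfying the cocycle condition over $\tilde X\times_X\tilde X\times_X\tilde X$.

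The key step is to analyze the descent datum $\phi$ using the hypothesis that $\tilde X\times_X\tilde X$ is the union of the images of the diagonal $\Delta_f$ and of $\tilde Z\times_Z\tilde Z$. I would apply Lemma \ref{closed van kampen}, with $X$ there replaced by $\tilde X\times_X\tilde X$ and the two closed subspaces being (the images of) $\Delta_f(\tilde X)\cong\tilde X$ and $\tilde Z\times_Z\tilde Z$; their intersection contains $\Delta_f(\tilde Z)\cong\tilde Z$, and in fact one checks it equals $\tilde Z$ sitting diagonally. Consequently an object of $\eet(\tilde X\times_X\tilde X)$ — in particular the isomorphism $\phi$ — is equivalent to giving its restriction $\phi|_{\tilde X}$ along the diagonal, its restriction $\phi|_{\tilde Z\times_Z\tilde Z}$, and agreement of the two over $\tilde Z$. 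Restricting the cocycle/normalization conditions along $\Delta_f$ forces $\phi|_{\tilde X}=\id_W$ (the standard normalization of a descent datum), so the only remaining data is an isomorphism $\psi\coloneqq\phi|_{\tilde Z\times_Z\tilde Z}\colon q_1^*(W|_{\tilde Z})\xrightarrow{\cong}q_2^*(W|_{\tilde Z})$ compatible with $\id$ over $\tilde Z$ and satisfying the cocycle condition over $\tilde Z\times_Z\tilde Z\times_Z\tilde Z$ — i.e., a descent datum for the map $\tilde Z\to Z$. Here I would need the companion fact that $\tilde Z\to Z$ is itself a proper surjective map (it is the base change of $f$ along the closed immersion $Z\hookrightarrow X$, hence proper, and surjective since $f$ is), so it too is an effective étale descent map, giving an equivalence between such pairs $(W|_{\tilde Z},\psi)$ and $\eet(Z)$.

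Putting these identifications together: specifying an object of $\eet(X)$ amounts to specifying $W\in\eet(\tilde X)$ together with a descent datum for $\tilde Z\to Z$ on $W|_{\tilde Z}$; and the latter is the same as an object of $\eet(Z)$ with an isomorphism, in $\eet(\tilde Z)$, between its pullback to $\tilde Z$ and $W|_{\tilde Z}$. That is precisely the data of an object of the $2$-fiber product $\eet(\tilde X)\times_{\eet(\tilde Z)}\eet(Z)$, and the functor realizing this is the pullback functor, which yields the claimed equivalence. The main obstacle I anticipate is the careful bookkeeping in the previous paragraph: verifying that the cocycle condition over the triple fiber product $\tilde X\times_X\tilde X\times_X\tilde X$ — which a priori lives on a complicated space — decomposes so that it contributes nothing beyond the cocycle condition for $\psi$ over $\tilde Z\times_Z\tilde Z\times_Z\tilde Z$, once $\phi$ is known to be the identity along all diagonal strata. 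This requires identifying which "strata" of the triple product are hit by diagonals versus by $\tilde Z$-fiber-powers, and checking that the hypothesis on the double product propagates; I would handle it by repeatedly applying Lemma \ref{closed van kampen} (or Lemma \ref{etale analytique space and etale top space}-style gluing) to the triple product and reducing every condition to the two visible strata.
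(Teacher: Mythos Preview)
Your proposal is correct and follows essentially the same route as the paper: both arguments use that $f$ is an effective \'etale descent map, apply Lemma~\ref{closed van kampen} to the decomposition of $\tilde X\times_X\tilde X$ into $\Delta_f(\tilde X)$ and $\tilde Z\times_Z\tilde Z$ (with intersection $\Delta_f(\tilde Z)$), and then handle the cocycle condition by the analogous decomposition of the triple fibre product. The only organizational difference is that the paper builds a quasi-inverse directly---starting from a triple $(Y,W,\phi)$ in the target, it manufactures the descent datum $\lambda$ on $Y$ and descends---whereas you identify both sides with an intermediate category of descent data, invoking effective descent for $\tilde Z\to Z$ as well; this is a cosmetic distinction rather than a different idea.
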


\begin{proof}
Let us construct a quasi-inverse of the pullback functor.  So we start with
a triple \((Y,W,\phi)\), where $Y\in\eet(\tilde{X})$, $W\in\eet(Z)$, and
$\phi: Y \times _{\tilde X}\tilde Z\xrightarrow{\cong} W\times_Z
\tilde Z$ is an isomorphism.
The idea is to construct a descent datum of \(Y\) for the morphism
\(f\), then we get the desired object in \(\eet(X)\) by applying
\cite[{}4.1, 4.7]{JT94} or \cite[Theorem 5.6]{Ver94}.

To construct this descent datum, we need to work on \(\tilde
X\times_X \tilde X\) using the full faithfulness part of the equivalence
\begin{equation}\label{two product}
\eet(\tilde{X}\times_X\tilde{X})\xrightarrow{\ \ \cong\ \ }
\eet(\tilde X)\times_{\eet(\tilde Z)}\eet\left(
\tilde Z\times_Z\tilde Z \right)
\end{equation}
which follows from Lemma \ref{closed van kampen}, because
the fiber product of the two closed immesions in the current lemma is the
closed immersion \(\tilde Z \xrightarrow{\Delta_f}\tilde
X\times_X\tilde X\).  Under \eqref{two product}, \(p_i^*(Y)\) corresponds
to the triple \((Y,q_i^*(Y|_{\tilde Z}),{\rm can}_i)\) for \(i=1,2\).
Here \(p_i:\tilde X\times_X \tilde X\to \tilde X\), \(q_i:\tilde Z \times_Z\tilde
Z\to Z\) are projections, and \({\rm can}_i\) is the obvious canonical
isomorphism.  The desired descent datum \(\lambda:p_1^*(Y) \to p_2^*(Y)\)
corresponds to \((Y,q_1^*(Y|_{\tilde Z}),{\rm
  can}_1)\to(Y,q_2^*(Y|_{\tilde Z}),{\rm can}_2)\) given by \({\rm
  id}_Y: Y \to Y\), \(\varphi: q_1^*(Y|_{\tilde Z})\to
q_2^*(Y|_{\tilde Z})\), where \(\varphi\) is the canonical descent
datum signifying the fact that \(Y|_{\tilde Z}\) is isomorphic to \(W
\times_Z \tilde Z\) via \(\phi\).

We have to show that \(\lambda\) is indeed a descent datum.  That is,
considering the triple fibred product, the fibred product and the projections:
\[
\begin{tikzpicture}[xscale=5,yscale=2.2]
    \node (A0_0) at (0, 0) {$\tilde{X}\times_X\tilde{X}\times_X\tilde{X}$};
    \node (A0_1) at (1, 0) {$\tilde{X}\times_X\tilde{X}$};
    \node (A0_2) at (2, 0) {$\tilde{X}$};
    
    \draw[>=latex,->] ([yshift= 2pt] A0_0.east) to
        [out=40,in=140]   node[above,scale=0.5]{$p_{12}$}   ([yshift= 2pt] A0_1.west);
    \draw[>=latex,->] (A0_0.east) -- node[above,midway,scale=0.5]{$p_{23}$} 
        (A0_1.west);
    \draw[>=latex,->] ([yshift=-2pt] A0_0.east) to
        [out=-40,in=-140]    node[below,scale=0.5]{$p_{13}$}   ([yshift=-2pt] A0_1.west);

    \draw[>=latex,->]                ([yshift= 1pt] A0_1.east)
        to [out=30,in=150] node[above,scale=0.5]{$p_{1}$}([yshift= 1pt] A0_2.west);
    \draw[>=latex,->] ([yshift=-1pt] A0_1.east) to
       [out=-30,in=-150] node[below,scale=0.5]{$p_{2}$} ([yshift=-1pt] A0_2.west);
\end{tikzpicture},
\]
we have to show the cocycle condition $p_{23}^*\lambda\circ
p_{12}^*\lambda=p_{13}^*\lambda$. Since the fibred
product is covered by $\Delta_f(\tilde{X})$ and
$\tilde Z \times_Z \tilde Z$, the triple fibred product is
covered by the triple diagonal $\Delta_f^3(\tilde{X})$ and the
closed subset
\(\tilde Z\times_Z\tilde Z \times_Z \tilde Z
    \).
Applying 
\ref{closed van kampen} we get an equivalence \begin{equation}\label{triple
    product}
\eet(\tilde{X}\times_X\tilde{X}\times_X\tilde{X})\xrightarrow{\ \ \cong\ \ }
\eet(\tilde X)\times_{\eet(\tilde Z)}\eet\left(
\tilde Z\times_Z\tilde Z\times_Z\tilde Z \right)
\end{equation} By the faithfulness part of \eqref{triple product}, it is enough to
check the equality
$p_{23}^*\lambda\circ
p_{12}^*\lambda=p_{13}^*\lambda$ on $\Delta_f^3(\tilde{X})$ and
$\tilde Z \times _Z \tilde Z \times_Z \tilde Z$ separately. On
$\Delta_f^3(\tilde{X})$ all the pullbacks of $\lambda$ are
identities, so there is nothing to check. On 
$\tilde Z \times _Z \tilde Z \times_Z \tilde Z$ the equality holds
because $\lambda|_{\tilde Z\times_Z \tilde Z}=\varphi$  is a descent
datum for \(\tilde Z \to Z\). 
\end{proof}


\begin{thm}\label{geom.covers are covering spaces}

    Let $X$ be a scheme locally of finite type over $\C$, and let $p\colon Y\to X$ be a
    geometric cover. Then $p^\an$ is a covering space.

\end{thm}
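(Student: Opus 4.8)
The plan is to reduce, by induction on $\dim X$, to the case of a normal base, which Lemma~\ref{locally trivial} already handles. First, harmless reductions. Being a covering space is local on the base, and $X^\an$ is covered by the analytifications of the affine open subschemes of $X$, over each of which $p$ restricts to a geometric cover; so we may assume $X$ affine, hence of finite type over $\C$ and of finite Krull dimension. Since $\eet(-)$ is insensitive to nilpotents and $X_{\mathrm{red}}^\an\to X^\an$ induces a homeomorphism on underlying topological spaces, Lemma~\ref{etale analytique space and etale top space}(2) lets us also assume $X$ reduced; this changes neither $p$ as a geometric cover nor $p^\an$ as a map of topological spaces. Now induct on $d=\dim X$. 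If $X$ is normal (in particular if $d=0$), then every geometric cover of $X$ is a disjoint union of finite étale covers — over a geometrically unibranch scheme the pro-étale fundamental group is profinite, so a connected geometric cover has open point stabilizers and is finite étale (cf.~\cite{BS15}) — and $p^\an$ is a covering space by Lemma~\ref{locally trivial}.

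Assume $X$ is not normal and let $\nu\colon X^\nu\to X$ be the normalization: a finite surjective map, an isomorphism over the complement of the closed nowhere-dense non-normal locus $S$ (so $\dim S<d$); set $\widetilde S\coloneq\nu^{-1}(S)$, whence $\dim\widetilde S<d$ too. Both $Y\times_X X^\nu\to X^\nu$ and $Y\times_X S\to S$ are geometric covers; by the normal case $(Y\times_X X^\nu)^\an\to(X^\nu)^\an$ is a covering space, and by the inductive hypothesis $(Y\times_X S)^\an\to S^\an$ is a covering space. Applying Lemma~\ref{closed van kampen v2} to the proper surjection $\nu^\an\colon(X^\nu)^\an\to X^\an$ and the closed subspace $S^\an$ — the covering hypothesis on $(X^\nu)^\an\times_{X^\an}(X^\nu)^\an$ holds because $\nu$ is an isomorphism over $X\setminus S$, so every point of the fibre product off the diagonal lies over $S$ — yields an equivalence
\[
\eet(X^\an)\ \xrightarrow{\ \cong\ }\ \eet\big((X^\nu)^\an\big)\times_{\eet(\widetilde S^\an)}\eet(S^\an),
\]
under which a general object $q\colon P\to X^\an$ corresponds to the triple $\big(P\times_{X^\an}(X^\nu)^\an,\ P\times_{X^\an}S^\an,\ \operatorname{can}\big)$. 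Thus, taking $q=p^\an$, the theorem follows from the claim: if $q\colon P\to X^\an$ is étale and both $P\times_{X^\an}(X^\nu)^\an$ and $P\times_{X^\an}S^\an$ are covering spaces, then $q$ is a covering space.

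To prove the claim: over $X^\an\setminus S^\an$ the map $q$ is identified with $P\times_{X^\an}(X^\nu)^\an$ (there $\nu^\an$ is an isomorphism), so it is a covering space there; the content is at a point $x\in S^\an$. Write $J\coloneq q^{-1}(x)$; the fibre of $P\times_{X^\an}(X^\nu)^\an$ over each point of the finite set $\nu^{-1}(x)$ is canonically $J$, and the fibre of $P\times_{X^\an}S^\an$ over $x$ is $J$. Using that these two pullbacks are covering spaces, trivialize $P\times_{X^\an}(X^\nu)^\an$ over pairwise disjoint connected open neighborhoods of the points of $\nu^{-1}(x)$, and $P\times_{X^\an}S^\an$ over a connected neighborhood of $x$ in $S^\an$, all compatibly with the identifications of the relevant fibres with $J$. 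Since $\nu^\an$ is closed, Lemma~\ref{what proper maps are good for}, combined with the local connectedness of analytic spaces, allows us to shrink to a connected neighborhood $U\ni x$ over which both trivializations are defined and — crucially — such that every connected component of $\nu^{-1}(U)$, and of $\nu^{-1}(U\cap S^\an)$, meets $\nu^{-1}(x)$. The canonical descent datum on $\nu^{-1}(U)\times_U\nu^{-1}(U)$ is the identity along the diagonal and along the fibre over $x$; being locally constant in the chosen trivializations, it is then the identity everywhere, and hence $q$ is trivial over $U$.

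I expect this last step to be the main obstacle: unlike étaleness, \emph{being a covering space} does not descend along the proper surjection $\nu^\an$ for formal reasons, and making rigorous the shrinking so that every connected component of $\nu^{-1}(U)$ meets $\nu^{-1}(x)$ requires controlling the local topology of the finite morphism $\nu^\an$ near $S^\an$ — beyond Lemma~\ref{what proper maps are good for}, one must appeal to the local triviality of analytic morphisms along a stratification (triangulability, in the spirit of \cite{Loj64}). This is precisely the phenomenon visible in the nodal-curve example of the introduction, where a globally nontrivial \emph{shift} descent datum nevertheless restricts, near the node, to a trivial covering space.
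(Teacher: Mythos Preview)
Your overall strategy coincides with the paper's: reduce to $X$ affine, reduced, of finite type; induct on $\dim X$; handle the normal case via \cite[7.4.10]{BS15} and Lemma~\ref{locally trivial}; and for the inductive step pass to the normalization and invoke Lemma~\ref{closed van kampen v2}. (The paper uses the singular locus rather than the non-normal locus, but either choice verifies the covering hypothesis on the fibre square.)

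The gap you flag in your last paragraph is genuine, and it is precisely where the paper's execution differs from yours. You want to shrink $U$ so that every connected component of $\nu^{-1}(U)$ and of $\nu^{-1}(U\cap S^\an)$ meets $\nu^{-1}(x)$, in order to propagate the identity descent datum outward from the fibre over $x$; this is not supplied by Lemma~\ref{what proper maps are good for} alone, and while triangulability would furnish it, the paper needs no such input. Its trick: having trivialized $p^\an$ on disjoint $V_i\ni x_i$ (with index sets $J_i$) and on $S^\an\cap U$ (with index set $J$), it chooses a \emph{connected} open neighborhood $A_i$ of $x_i$ \emph{inside the closed set} $W_i\coloneq V_i\cap\nu^{-1}(S^\an\cap U)$. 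The transition isomorphism $\phi_i$ is locally constant, so on the connected $A_i$ it is a single bijection $J_i\cong J$, which one normalizes to the identity. Writing $A_i=V_i'\cap\nu^{-1}(S^\an\cap U)$ for some open $V_i'\subset V_i$ and applying Lemma~\ref{what proper maps are good for} once more yields $U'\ni x$ with $\nu^{-1}(U')\subset\coprod V_i'$. Now, regardless of how many components $\nu^{-1}(U')$ or $\tilde S_{U'}\coloneq\nu^{-1}(S^\an\cap U')$ may have, the gluing datum over $\tilde S_{U'}$ is the restriction of $\phi_i|_{A_i}$ and hence already the identity on index sets; the triple corresponding to $p^\an|_{U'}$ under Lemma~\ref{closed van kampen v2} is then visibly trivial. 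In short: instead of forcing connectedness of preimages of $U$, the paper pushes the connectedness requirement into the closed subspace $\tilde S^\an$, where a connected neighborhood of each $x_i$ exists for free by local connectedness of analytic spaces.
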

\begin{proof}
Let $x\in X(\C)$ be a point. We want to find an open neighborhood of $x$ in
    $X^\an$
    over which $p^\an$ is a trivial covering space. For this we may
    and do assume
    that $X=\Spec(A)$, where $A$ is reduced (cf.~\ref{etale analytique space and etale
      top space} (2)) and of finite type.  Thus \(X\) is finite-dimensional.

We first remark that the theorem is true when \(X\) is normal.  Indeed
in this case 
\cite[{}7.4.10]{BS15} implies that every geometric cover \(Y\) is a
disjoint union of finite \'etale cover.  So the theorem follows from
\ref{locally trivial}.

We will apply induction on the dimension of $X$.  The theorem is
    trivial if \(\dim X=0\).  By the induction
    hypothesis, if $Z\subseteq X$ is the singular locus of $X$, then
    $p^\an|_{Z^\an}$ is
    a covering space.

    Let $f\colon \tilde{X}\to X$ be the normalization of \(X\)
    (cf.~\cite[\href{https://stacks.math.columbia.edu/tag/035E}{035E}]{stacks-project}). Then $f$ is finite, and it is an isomorphism away
    from $Z$. If $x\notin Z$, then the desired result follows from the
    first remark
    and \ref{locally trivial},
    so we may assume that $x\in Z$.

Write \(f^{-1}(x)=\{x_1,\ldots,x_n\}\subset\tilde{X}(\mathbb{C})\).
Since the topology of \({\tilde{X}}^\an\) is separated (\(\tilde{X}\) being affine), we
can find open neighborhoods \(V_i\) of \(x_i\) in \(\tilde X^\an\)
such that \(V_i\cap V_j=\emptyset\) for \(i\neq j\).  By \ref{what
  proper maps are good for}, we can find an open neighbordhood \(U\)
of \(x\) such that \({f^\an}^{-1}(U) \subset \coprod_{1\leq i\leq n}
V_i\).

We now choose \(\{V_i\}\) and \(U\) satisfying the conditions in the
preceding paragraph and in addition satifying that \(p^\an\) is trivial on each \(V_i\) and
on \(Z^\an \cap U\).  This is possible by our first remark, the
induction hypothesis, and
\ref{locally trivial}.

Write
\[
  p^\an|_{V_i}=\coprod_{J_i} V_i, \qquad p^\an|_{Z^\an \cap
    U}=\coprod_{J}  (Z^\an \cap U)
\]
for some sets \(J_i\) and \(J\).
Let \(W_i=V_i\cap f^{-1}(Z^\an \cap U)\) and let
\(\phi_i:(p^\an|_{V_i})|_{W_i} \to (p^\an|_{Z^\an \cap U})|_{W_i}\) be the
natural isomorphism.  Choose a {\it connected} open neighborhood \(A_i\) of
\(x_i\) in \(W_i\).  Then \(\phi_i|_{A_i}\) determines a bijection \(J_i
\simeq J\) (which may depend on the choice of \(A_i\)).  Changing the
sets \(J_i\) if necessary, we may and do
assume that \(J_i=J\) for each \(i\) and the bijection \(J_i=J\to J\)
induced by \(\phi|_{A_i}\) is the identity.

Finally, we write \(A_i=V_i'\cap f^{-1}(Z^\an \cap U)\) with \(V_i'\)
an open neighborhood of \(x_i\) in \(V_i\), and we find an open
neighborhood \(U'\) of \(x\) in \(U\) such that \({f^\an}^{-1}(U')
\subset \coprod_{1\leq i\leq n} V_i'\).  We now claim that
\(p^\an|_{U'}\) is a trivial covering space.

Write \(\tilde U'={f^\an}^{-1}(U')\) and \(Z_{U'}=Z^\an\cap U'\).
Applying \ref{closed van kampen v2} we have the following equivalence 
    \begin{equation}\label{trivial equivalence}
        \eet(U')\xrightarrow{\ \ \cong\ \ }
        \eet(\tilde U')\times_{\eet(\tilde{Z}_{U'})}\eet(Z_{U'})
    \end{equation}
    where $\tilde{Z}_{U'}\coloneq f^{-1}(Z_{U'})$.   We wish to show that
    \(p^\an|_{U'}\), an object in \(\eet(U')\), is a trivial cover.
    Consider the corresponding object on the right hand side of
    (\ref{trivial equivalence}), which is the triple
    \((p^\an|_{U'},p^\an|_{Z_{U'}},\phi)\).  We have
    \(p^\an|_{U'}=\coprod_J U'\), \(p^\an|_{Z_{U'}}=\coprod_J
    Z_{U'}\), and \(\phi\) is the identity since
    \(\phi\) is simply \(\phi_i\) on \({{f^\an}^{-1}(U')\cap
      V_i'}\subset A_i\).  Clearly this triple corresponds to a
    trivial cover in \(\eet(U')\).  This proves the claim and hence
    the theorem. \end{proof}

\section{Algebraizable Covering Spaces are Geometric Covers}
\begin{thm}\label{covering spaces are geometric covers} Let 
    $p\colon Y\to X$ be a map of schemes locally of finite type over $\C$. If $p^\an$ is a
    covering space of $X^\an$, then $p$ is a geometric cover of $X$. 
\end{thm}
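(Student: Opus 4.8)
The plan is to reduce to the étale-local structure of $p$ and then use the characterization of geometric covers as pro-étale-locally constant sheaves, checking the defining condition Zariski-locally on $X$. First I would reduce to the case $X = \Spec(A)$ affine (and, by Lemma~\ref{etale analytique space and etale top space}(2), reduced) of finite type over $\C$, hence finite-dimensional; since $p^\an$ is a covering space, $p^\an$ is in particular étale, so $p$ is étale by the analytic-to-algebraic comparison for étale maps (it is a local isomorphism on the analytic side, and étaleness descends back to the scheme). Thus $p\colon Y\to X$ is an étale morphism of finite-dimensional $\C$-schemes; what remains is to show that an étale morphism whose analytification is a covering space is a geometric cover, i.e.\ satisfies the valuative-type criterion of \cite[Definition~7.3.1]{BS15}: $p$ is étale and for every valuation ring $V$ with fraction field $K$ and every commutative square $\Spec K\to Y$, $\Spec V\to X$, there is a unique lift $\Spec V\to Y$. (Equivalently, $Y\to X$ is étale and $\sh$-covering in the sense that it satisfies the pro-étale-local triviality; I would use whichever formulation of \cite{BS15} is cleanest.)

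The key technical input should be Theorem~\ref{geom.covers are covering spaces} in a "spreading out" form together with descent, run by induction on $\dim X$ exactly as in the proof of that theorem. Concretely: let $Z\subseteq X$ be the singular locus, $f\colon \tilde X\to X$ the normalization, which is finite and an isomorphism away from $Z$. By the induction hypothesis, $p|_{Z}$ is a geometric cover of $Z$; and since $\tilde X$ is normal, $p|_{\tilde X}$ is a geometric cover of $\tilde X$ — here I would invoke \cite[7.4.10]{BS15}, which says that over a normal scheme a geometric cover is a disjoint union of finite étale covers, in reverse: an étale $p|_{\tilde X}$ whose analytification is a covering space is, locally on $\tilde X^\an$ (hence by Lemma~\ref{locally trivial}-type reasoning and the fact that finite-ness of the fibers can be checked analytically), a disjoint union of finite étale covers, which is a geometric cover. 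Then I would glue: geometric covers satisfy descent along the proper surjection $f$ (the "closed van Kampen" Lemma~\ref{closed van kampen v2} is the topological shadow of this; on the scheme side one uses that $\Cov$ is a stack for the $h$-topology, or directly proper descent for the relevant sheaf condition), and the triple $(p|_{\tilde X}, p|_Z, \phi)$ assembled from the analytic gluing data algebraizes the descent datum. Hence $p$ itself is a geometric cover of $X$.

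The main obstacle I anticipate is the normal case, i.e.\ showing directly that an étale $\C$-morphism $p\colon Y\to X$ with $X$ normal and $p^\an$ a covering space is a disjoint union of finite étale covers (equivalently, that it is a geometric cover). The subtlety is that "covering space" over $X^\an$ does not a priori bound the cardinality of the fibers, so one must rule out, for a connected $X$, components of $Y^\an$ with infinite degree over $X^\an$; this is where I expect to need that $p$ is \emph{algebraic} — a finite-type scheme map has finite fibers over each point and "quasi-compact connected components" in a suitable sense — to upgrade a connected covering space of finite degree into a finite étale cover, and to handle infinitely many components by taking a disjoint union over the (possibly infinite) set of finite étale covers that appear. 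A clean way to organize this: over the generic point $\eta$ of $X$, $Y_\eta$ is a (possibly infinite) disjoint union of finite separable extensions of $\kappa(\eta)$; spread each out to a finite étale cover over a dense open, then use normality of $X$ (purity of the branch locus / Zariski–Nagata) together with the covering-space hypothesis on $X^\an$ to see that no branching occurs, so each spreads out to a finite étale cover over all of $X$. I would then feed this into the induction and the descent step above to conclude.
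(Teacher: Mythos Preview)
Your approach is genuinely different from the paper's, and the normal case---which you correctly flag as the crux---has a real gap.

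The paper does \emph{not} run an induction on $\dim X$ or glue along the normalization. Instead it verifies the valuative criterion for $p$ directly: given a square with a valuation ring $R$, it performs two successive base changes (first to the image of $A\to R$, then to the integral closure of $A$ in the residue field $\kappa(y)$ of the image of $\Spec K\to Y$) so that one may assume $X$ is normal integral and the relevant connected component $V\subseteq Y$ has the \emph{same} function field as $X$. Then Zariski's Main Theorem, applied to each quasi-compact open of $V$, shows $p|_V$ is an open immersion; the covering-space hypothesis forces $p^\an|_{V^\an}$ to be surjective, hence $p|_V$ is an isomorphism, and the lift is obvious. This is short and avoids any descent or purity input.

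Your normal-case sketch, by contrast, does not go through as written. First, $p$ is only \emph{locally} of finite type, so ``a finite-type scheme map has finite fibers'' is false here: an \'etale $Y\to X$ can have infinite discrete fibers, and a connected component $V\subseteq Y$ need not be quasi-compact. Thus ``spread each generic point out to a finite \'etale cover over a dense open'' is not available without first controlling $V$. Second, even granting a finite \'etale cover over a dense open, Zariski--Nagata purity only extends across codimension~$\geq 2$; you still need an argument in codimension~$1$, and it is not clear how the analytic covering-space hypothesis feeds in. What actually works (and is essentially what the paper does in its reduction) is: for any affine open $W\subseteq V$, apply ZMT to get $W$ open in a finite $X$-scheme, normalize to land in the normalization $\bar X$ of $X$ in the function field of $V$, and deduce that $V\to\bar X$ is an open immersion; then use that $V^\an\to X^\an$ is a covering of a connected base (hence surjective) to conclude $V\to\bar X$ is an isomorphism and $\bar X\to X$ is \'etale. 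Purity is neither needed nor sufficient.

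Two smaller points. You never record that $p$ is \emph{separated}; the paper gets this from the fact that a covering map is separated together with \cite[Expos\'e XII, Proposition~3.1]{SGA1}, and it is essential for the uniqueness half of the valuative criterion and for applying ZMT. And your appeal to Lemma~\ref{etale analytique space and etale top space}(2) to reduce to $X$ reduced is misplaced: that lemma concerns analytic spaces, not schemes (though the reduction is of course valid for other reasons). Finally, the descent step---deducing that $p$ is a geometric cover from $p|_{\tilde X}$ and $p|_Z$ being so---would require $h$-descent for \'etale $X$-schemes (to identify the descended geometric cover with the given $p$), which is true but is heavier machinery than the paper's direct argument; you should at least make that identification step explicit, since ``$\Cov$ is a stack'' alone only produces \emph{some} geometric cover, not $p$.
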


\begin{proof} It is easy to see that a covering map is separated.  By \cite[Expoosé XII, Proposition 3.1 (iii), (viii)]{SGA1}, $p$
    is étale and separated. Since geometric covers are locally constant sheaves in the
    pro-étale topology, they satisfy effective pro-étale descent, so in
    particular effective Zariski descent. Thus we may assume that $X=\Spec(A)$, where $A$ of finite type.

    Now suppose that $R$ is a valuation ring with function field $K$, and
    suppose that we have the following commutative diagram
    \begin{equation}\label{homotopy lifting property}
        \begin{tikzpicture}[xscale=2.9,yscale=-1.2,
            baseline=(current bounding box.center)]
            \node (A0_0) at (0, 0) {$\Spec(K)$};
            \node (A0_1) at (1, 0) {$Y$};
            \node (A1_0) at (0, 1) {$\Spec(R)$};
            \node (A1_1) at (1, 1) {$X$};

            \draw[>=latex,->]                (A0_0) --
                node[auto]{$\scriptstyle{\eta}$}
                (A0_1);
          
            \draw[>=latex,->]                (A1_0) --
                node[auto]{$\scriptstyle{(x,\xi)}$}
                (A1_1);
            \draw[dashed,>=latex,->]                (A1_0) --
                node[auto]{}
                (A0_1);
            
            \path (A0_1) edge [->] node[right,scale=1]
                {$\scriptstyle{p}$} (A1_1);
            \path (A0_0) edge [->] node[auto] {$\scriptstyle{}$} (A1_0);
        \end{tikzpicture}
    \end{equation}
We have to show that there exists a unique dashed arrow as above which makes all the triangles
commutative.

If the morphism \(\Spec R\to X\) factors as \(\Spec R \to X'\to X\),
we can put \(Y'=Y\times_X X'\) and form the diagram
 \begin{equation}\label{homotopy lifting property2}
        \begin{tikzpicture}[xscale=2.9,yscale=-1.2,
            baseline=(current bounding box.center)]
            \node (A0_0) at (0, 0) {$\Spec(K)$};
            \node (A0_1) at (1, 0) {$Y'$};
            \node (A0_2) at (2,0) {$Y$};
            \node (A1_0) at (0, 1) {$\Spec(R)$};
            \node (A1_1) at (1, 1) {$X'$};
            \node (A1_2) at (2,1) {$X$};

            \draw[>=latex,->]                (A0_0) --
                node[auto]{}
                (A0_1);
         \draw[>=latex,->]                (A0_1) --
                node[auto]{}
                (A0_2);
          
            \draw[>=latex,->]                (A1_0) --
                node[auto]{}
                (A1_1);
          \draw[>=latex,->]                (A1_1) --
                node[auto]{}
                (A1_2);
            \draw[dashed,>=latex,->]                (A1_0) --
                node[auto]{}
                (A0_1);
            
            \path (A0_1) edge [->] node[right,scale=1]
                {$\scriptstyle{p'}$} (A1_1);
             \path (A0_2) edge [->] node[right,scale=1]
                {$\scriptstyle{p}$} (A1_2);
          \path (A0_0) edge [->] node[auto] {$\scriptstyle{}$} (A1_0);
        \end{tikzpicture}
    \end{equation}
If the dashed arrow \(\Spec(R)\to Y'\) in (\ref{homotopy lifting
  property2}) exists, clearly the composition \(\Spec(R)\to Y'\to Y\)
can serve as the dashed arrow in (\ref{homotopy lifting property}).
Thus we can reduce the problem for \(p\) to the problem for \(p'\),
because \cite[Exposé XII, 1.2]{SGA1} and \({p'}^\an\) is a covering space too.

Let \(A'\) be the image of \(A\to R\).  Then we can apply the above reduction
with \(X'=\Spec(A')\).  Thus we may and do assume that \(X\) is
integral, and that \(A\to R\) is injective.

Let \(y \in Y\) be the image of \(\eta \).  It lies above the generic
point of \(X\).  Therefore the residue field \(\kappa(y)\) of \(y\) is a finite
extension of \({\rm Frac}(A)\).  Let \(A''\) be the integral closure of
\(A\) in \(\kappa (y)\).  Then we can apply the above reduction again
with \(X'=\Spec(A'')\).  Observe that the map \(\Spec K \to Y'\)
factors through \(\Spec K \to \Spec \kappa(y)\).  Therefore, after
replacing \(X\) with \(X'\), we may and do assume: \(X\) is normal and
\(\kappa (y)={\rm Frac}(A)\).



Since $X$ is normal and $p$ is étale, $Y$ is normal. Thus if
$V\subseteq Y$ denote the connected component of $Y$ containing $y$, then $V$
is irreducible. As $p$ is étale, $p(y)\in X$ is the generic point if and only
if $y\in Y$
is a generic point, so $y$ is the generic point of $V$.
By assumption $p^\an$ is a covering space, since $V^\an\subseteq Y^\an$ is
also a connected component, $(p|_V)^\an=p|_{V^\an}$ is also a
covering space of $X^\an$. Thus we may assume that $Y=V$, \emph{i.e.} we may
assume  moreover that $Y$ is an integral normal scheme, and that \(Y\)
and \(X\) has the same function field.

At this moment, we don't know whether \(Y\) is of finite type so we
may not be able to apply  Zariski's Main Theorem (in the form of
\cite[\S 4.4, Corollary 4.6]{Liu02}) to \(p\).  But we can apply it to
$p|_V$ for any quasi-compact open \(V \subseteq Y\), and deduce that
\(p|_V\) is an open embedding for any such \(V\).  We claim that \(p\) itself is an open embedding.
If $p(s)=p(t)$ for $s,t\in Y$, then we can find open affine neighborhoods $V_s,V_t$ of
$s,t$ respectively so that $p|_{V_s}$, $p|_{V_t}$ are open embeddings. As
$V\coloneq V_s\cup V_t$ is quasi-compact, $p|_{V}$ is also an open embedding.
Thus $s=t$, so $p$ is injective. This proves the claim.

As $Y$ is non-empty, so is $Y^\an$. Since $X^\an$ is connected
(cf.~\cite[Exposé XII, Proposition 2.4]{SGA1}), $p^\an$ is surjective. Thus by \cite[Exposé XII, Proposition 3.2 (i)]{SGA1}, $p$ is surjective, or
equivalently, $p$ is an isomorphism. Now we see clearly that \eqref{homotopy lifting property} has a unique
lift, as desired.
\end{proof}

\section{Full Faithfulness}

\begin{thm}\label{full faithfulness} Let $X$ be a scheme locally of finite type over $\C$. Then the
    analytification functor $(-)^\an$ restricting to the category  of geometric covers $\cov(X)$ is fully faithful.

    Moreover, the full subcategory $\Cov(X)\subset \Cov(X^\an)$ is stable
    under taking ``subobjects'' and ``quotients'', \emph{i.e.} if $p\colon Y\to X$
is a geometric cover and if $q'\colon U'\to X^\an\in\Cov(X^\an)$  admits either an injection $a\colon U'\hookrightarrow Y^\an$ or a surjection
$b\colon Y^\an\twoheadrightarrow U'$ over $X^\an$, then
the covering space
$q'$ is algebraizable. 
\end{thm}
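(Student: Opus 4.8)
The plan is to prove the two assertions in turn. \textbf{Full faithfulness.} Let $p\colon Y\to X$ and $q\colon Z\to X$ be geometric covers. Since analytification commutes with fibre products, an $X^\an$-morphism $Y^\an\to Z^\an$ (resp.\ an $X$-morphism $Y\to Z$) is the same thing as a section of $W^\an\to Y^\an$ (resp.\ of $W\to Y$), where $W:=Y\times_X Z$. Geometric covers are stable under base change, so $W\to Y$ is a geometric cover; in particular $W$ is a $\C$-scheme locally of finite type and, by \ref{geom.covers are covering spaces}, $W^\an\to Y^\an$ is a covering space. Because $W\to Y$ is \'etale and separated (covering spaces are separated, hence so is $W\to Y$ by \cite[Expos\'e XII, Proposition 3.1]{SGA1}), any section of $W\to Y$ is an open and closed immersion onto its image, and likewise on the analytic side; thus sections of $W\to Y$ correspond to clopen subschemes $V\subseteq W$ with $V\to Y$ an isomorphism, and sections of $W^\an\to Y^\an$ to clopen subspaces $V'\subseteq W^\an$ with $V'\to Y^\an$ an isomorphism. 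By \cite[Expos\'e XII, Proposition 2.4]{SGA1}, applied componentwise (a scheme locally of finite type over $\C$ and an analytic space being locally connected), $(-)^\an$ induces a bijection between the clopen subschemes of $W$ and of $W^\an$ compatible with the maps to $Y$; and under this bijection ``$V\to Y$ is an isomorphism'' is equivalent to ``$V^\an\to Y^\an$ is an isomorphism'', because an \'etale morphism of $\C$-schemes locally of finite type which becomes an isomorphism after analytification is itself an isomorphism (it is bijective on $\C$-points, hence of degree one, hence an open immersion whose image is an open subset of the Jacobson scheme $Y$ containing every closed point). Matching the two descriptions yields $\Hom_X(Y,Z)\xrightarrow{\;\sim\;}\Hom_{X^\an}(Y^\an,Z^\an)$.

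\textbf{Stability under subobjects.} Let $p\colon Y\to X$ be a geometric cover and let $a\colon U'\hookrightarrow Y^\an$ be an injection in $\Cov(X^\an)$. Locally on $X^\an$, over a connected open $N$ trivializing both $Y^\an$ and $U'$, the map $a$ is an injective map of trivial covering spaces, hence an open immersion; moreover its image is closed, for if $y\notin a(U')$ then the sheet of $Y^\an|_N$ through $y$ is disjoint from $a(U')$ (a single common point would force that entire sheet into the image). So $a$ realizes $U'$ as a clopen subspace of $Y^\an$, which by the $\pi_0$-bijection above equals $V^\an$ for a unique clopen subscheme $V\subseteq Y$. Then $V\to X$ is \'etale, and it is a geometric cover: given a valuation ring $R$ with fraction field $K$ and a commutative square formed by $\Spec K\to V\hookrightarrow Y$ and $\Spec R\to X$, the lift $\Spec R\to Y$ furnished by the geometric cover $Y\to X$ has connected (indeed irreducible) source whose generic point lands in the clopen $V$, hence factors through $V$; uniqueness is inherited from $Y\to X$. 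As $V^\an\cong U'$, the covering space $U'$ is algebraizable.

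\textbf{Stability under quotients.} Let $p\colon Y\to X$ be a geometric cover and let $b\colon Y^\an\twoheadrightarrow U'$ be a surjection in $\Cov(X^\an)$. Over a trivializing open one checks that $b$ is a covering map; in particular $b$ is an \'etale surjection of analytic spaces, hence an effective epimorphism, and $U'$ is the quotient of $Y^\an$ by its kernel pair $R:=Y^\an\times_{U'}Y^\an$. Now $R\hookrightarrow Y^\an\times_{X^\an}Y^\an=(Y\times_X Y)^\an$ is the base change of the diagonal of $U'$ over $X^\an$, which is an open and closed immersion; hence $R$ is clopen in $(Y\times_X Y)^\an$, so $R=\mathcal R^\an$ for a unique clopen subscheme $\mathcal R\subseteq Y\times_X Y$. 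Transporting reflexivity, symmetry and transitivity from $R$ to $\mathcal R$ along the bijections $\pi_0(Y\times_X Y)\cong\pi_0((Y\times_X Y)^\an)$ and $\pi_0(Y\times_X Y\times_X Y)\cong\pi_0((Y\times_X Y\times_X Y)^\an)$, one finds that $\mathcal R$ is an \'etale equivalence relation on $Y$. Form $U:=Y/\mathcal R$: it is representable by a scheme \'etale over $X$ --- this can be seen by \'etale descent on $X$, over which $Y$ and $\mathcal R$ become disjoint unions of copies of a strictly henselian local base, so that $U$ is the corresponding disjoint union indexed by the quotient of the index set --- with $Y\to U$ an \'etale cover and $Y\times_U Y=\mathcal R$. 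Analytifying the \'etale surjection $Y\to U$ (surjective on $\C$-points, since $Y\to U$ is \'etale surjective) gives an \'etale surjection $Y^\an\to U^\an$ whose kernel pair is $\mathcal R^\an=R$; as $b$ has the same kernel pair, $U^\an\cong U'$. Finally, $U$ is a $\C$-scheme locally of finite type and $U^\an$ is a covering space, so $U\to X$ is a geometric cover by Theorem \ref{covering spaces are geometric covers}; thus $U'$ is algebraizable.

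The step I expect to be the main obstacle is the quotient construction in the last paragraph --- realizing $Y/\mathcal R$ as a scheme \'etale over $X$ and checking that analytification carries it to the analytic quotient $Y^\an/R=U'$, which rests on \'etale surjections of analytic spaces being effective epimorphisms. Verifying that $\mathcal R$ is an equivalence relation by transport of structure, and the bookkeeping with clopen subschemes and $\pi_0$ throughout, are routine but must be carried out with some care.
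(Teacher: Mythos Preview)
Your proof is correct and takes a genuinely different route from the paper's. For full faithfulness, the paper observes that $(-)^\an$ preserves connectedness (\cite[Exposé XII, Proposition 2.4]{SGA1}) and then invokes the infinite Galois formalism of Bhatt--Scholze via \cite[Proposition 2.64]{Lara19}; you instead argue directly through sections of $W=Y\times_XZ\to Y$ and the bijection $\pi_0(W)\cong\pi_0(W^\an)$, which is more elementary (though your ``degree one'' phrasing is loose since the section need not be finite over $Y$ --- better to invoke \cite[Exposé XII, Proposition 3.2]{SGA1} directly for ``isomorphism after analytification implies isomorphism''). For subobjects and quotients the paper passes to the normalization $\tilde X\to X$, over which every geometric cover splits as a disjoint union of finite étale covers, applies the Riemann Existence Theorem there, and descends back along the finite map $f$. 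You bypass both normalization and Riemann Existence: for subobjects you note that $U'$ is actually \emph{clopen} (not merely Zariski open) in $Y^\an$, so the $\pi_0$ bijection already algebraizes it to a clopen $V\subseteq Y$; for quotients you algebraize the kernel pair to a clopen étale equivalence relation $\mathcal{R}\subseteq Y\times_XY$ and form $U=Y/\mathcal{R}$, then feed $U^\an\cong U'$ back into Theorem~\ref{covering spaces are geometric covers}. The paper's route has the virtue of isolating the hard analytic input (Riemann Existence) in a single citation; yours is more structural and self-contained within the paper. The obstacle you flag is real but surmountable: since $\mathcal{R}\hookrightarrow Y\times_XY$ is a closed immersion, the algebraic space $Y/\mathcal{R}$ is separated and étale over $X$, hence a scheme (separated locally quasi-finite algebraic spaces over schemes are representable); and because étale surjections of analytic spaces admit local sections, $Y^\an\to U^\an$ is the coequalizer of $\mathcal{R}^\an\rightrightarrows Y^\an$, giving $U^\an\cong U'$.
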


\begin{proof} For the first statement we assume, without loss of generality, that $X$ is connected.
    Then both $\Cov(X)$ and $\Cov(X^\an)$ are infinite Galois
    categories. By \cite[Exp. XII Proposition 2.4]{SGA1} $(-)^\an$ sends
    connected objects to connected objects, so the full
    faithfulness follows from
    \cite[Proposition 2.64]{Lara19}.

By the full faithfulness, the second statement is Zariski local on $X$, so we may assume that $X$ affine and connected.

Suppose that there is an injection $a\colon
    U'\hookrightarrow Y^\an$. Then $a$ is étale, so it is an open
    embedding. Thus we can identify $U'$ as an open sub analytic space of
    $Y^\an$ via $a$. Now the task is to show that $U'$ is a Zariski
    open. If $Y=\displaystyle\coprod_{i\in I}Y_i$, where $Y_i$ is a finite
    étale cover over $X$, then $U_i'\coloneq U'\bigcap Y_i^\an$ is a finite covering
    space over $X^\an$, so it is algebraizable by the Riemann Existence Theorem.
    Therefore, $U'=\displaystyle\coprod_{i\in I}U_i'$ is also
    algebraizable. In the general case, we consider the normalization
    map $f\colon \tilde{X}\to X$. Set $\tilde{Y}\coloneq
    \tilde{X}\times_XY$ and $\tilde{U}'\coloneq
    \tilde{X}^\an\times_{X^\an}U'$.  Since $\tilde{Y}$ is a disjoint
    union of finite étale covers of $\tilde{X}$, $\tilde{U}'$ is algebraizable, so it comes
    from a Zariski open $\tilde{U}\subseteq \tilde{Y}$ by \ref{full
    faithfulness}. Then the complement
    \[
        Y^\an\setminus{U'}=f^\an(\tilde{Y}^\an\setminus{\tilde{U}'})=f(\tilde{Y}\setminus{\tilde{U}})^\an
    \]
    is Zariski-closed. Thus $U'$ is Zariski-open.

Suppose that there is a surjection $b\colon Y^\an\twoheadrightarrow U'$. If
$Y=\displaystyle\coprod_{i\in I}Y_i$, where $Y_i$ is finite étale and connected, then
$b(Y_i^\an)\subseteq U'$ is both open and closed. Indeed, $b(Y_i^\an)$
is open because $b$ is étale, and $b(Y_i^\an)$ is closed because it is the
image of the graph $Y_i^\an\subseteq Y_i^\an\times_{X^\an}U'$ of
$b|_{Y_i^\an}$ under the
proper map $Y_i^\an\times_{X^\an}U'\to U'$. Note that
the graph of $b|_{Y_i^\an}$ is closed since $q'\colon U'\to X^\an$ is
separated. In this way, $U'$ becomes a disjoint union of connected
components of the form $b(Y_i^\an)$ which are finite covering spaces. Thus
$U'$ is algebraizable by the Riemann Existence Theorem. In the general
case, one considers the normalization map $f\colon \tilde{X}\to X$. Since the
descent data of $q'\colon U'\to X^\an$ along $f^\an$ are all algebraic,  by the proper
descent of $\Cov(X)$, we find $q\colon U\to X$ in $\Cov(X)$ whose
analytification is $q'$, \emph{i.e.} $q'=q^\an$ is
algebraizable.
\end{proof}

\section{The Comparison Map}

\begin{thm}\label{comparison map} Let $X$ be a connected scheme locally of finite
    type over $\C$, and let $x_0\in X$ be a geometric point. Then there is a
    group
    homomorphism with dense image \[c_X^{\tp\to\pet}\colon
    \pi_1^\tp(X,x_0)\arr\pi_1^\pet(X,x_0)\] making the following diagram
    \[
    \begin{tikzpicture}[xscale=2.9,yscale=-2.2]
        \node (A0_0) at (0, 0) {$\pi_1^\tp(X^\an,x_0)$};
        \node (A0_2) at (2, 0) {$\pi_1^\pet(X,x_0)$};
        \node (A1_1) at (1, 1) {$\pi_1^\et(X,x_0)$};



        \path (A0_0) edge [->] node[below,scale=1]
            {$\scriptstyle{c_X^{\tp\to\et}}\hspace{25pt}$} (A1_1);
        \path (A0_0) edge [>=stealth,dashed,->] node[auto]
            {$\scriptstyle{c_X^{\tp\to\pet}}$} (A0_2);
        \path (A0_2) edge [->] node[auto]
            {$\scriptstyle{c_X^{\pet\to\et}}$} (A1_1);

    \end{tikzpicture}
\]
commutative, where $c_X^{\tp\to\pet}$ and $c_X^{\pet\to\et}$ are the natural
comparison maps. 

Moreover, for each discrete quotient
$u\colon\pi^\pet(X,x_0)\twoheadrightarrow G$,
\emph{i.e.} a surjective homomorphism where $G$ is a discrete group,
the composition $u\circ c_X^{\tp\to\pet}$ is also surjective. In particular, there is no nontrivial continuous
homomorphism from $\pi_1^\pet(X,x_0)$ to a discrete group $G$ whose
restriction to $\pi_1^\tp(X^\an,x_0)$ is trivial.\end{thm}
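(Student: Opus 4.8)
The plan is to deduce the theorem from Theorem~I (concretely, from \ref{geom.covers are covering spaces}) together with the functoriality of fundamental groups in the formalism of infinite Galois categories of \cite[\S 7]{BS15}. Since $X$ is connected, $X^\an$ is connected by \cite[Exposé XII, Proposition 2.4]{SGA1}, and $X^\an$ is locally contractible as recalled in the proof of \ref{locally trivial}; hence the category $\cov(X^\an)$ of covering spaces, equipped with the fiber functor $F^\tp_{x_0}$ at $x_0$ (which we regard as a point of $X^\an$, so $x_0\in X(\C)\subseteq X^\an$), is a tame infinite Galois category with fundamental group $\pi_1^\tp(X^\an,x_0)$, just as $\Cov(X)$ with $F^\pet_{x_0}$ has fundamental group $\pi_1^\pet(X,x_0)$. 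By \ref{geom.covers are covering spaces}, analytification is a functor $(-)^\an\colon \Cov(X)\to\cov(X^\an)$, and $F^\tp_{x_0}\circ(-)^\an\simeq F^\pet_{x_0}$ since the fiber of $Y^\an\to X^\an$ over $x_0$ is canonically $Y_{x_0}$; functoriality of $\pi_1$ then yields the continuous homomorphism $c_X^{\tp\to\pet}$, which sends a class $\gamma$ to the automorphism of $F^\pet_{x_0}$ acting on $F^\pet_{x_0}(Y)=(Y^\an)_{x_0}$ by the monodromy of the covering space $Y^\an$ along $\gamma$, naturally in $Y$. Commutativity of the triangle is formal: $c_X^{\pet\to\et}$ is induced by the inclusion of finite étale covers into $\Cov(X)$, $c_X^{\tp\to\et}$ by analytification of finite étale covers into $\cov(X^\an)$ (using the Riemann Existence Theorem to identify the source groups), and the two resulting composites through $(-)^\an$ agree, compatibly with the fiber functors.

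Next I would prove that the image of $c_X^{\tp\to\pet}$ is dense. Write $G=\pi_1^\pet(X,x_0)$ and $H=\operatorname{im}(c_X^{\tp\to\pet})$. Since $G$ is a Noohi group, its open subgroups form a neighborhood basis of the identity, so $H$ is dense if and only if $HU=G$ for every open subgroup $U\le G$. By tameness of $\Cov(X)$, any such $U$ is the stabilizer of a point in the fiber $F^\pet_{x_0}(Y)$ of some connected geometric cover $Y\to X$, and then $HU=G$ is equivalent to $H$ acting transitively on $F^\pet_{x_0}(Y)$. But $Y$, being étale over $X$, is a connected scheme locally of finite type over $\C$, so $Y^\an$ is connected by \cite[Exposé XII, Proposition 2.4]{SGA1}; hence $\pi_1^\tp(X^\an,x_0)$ acts transitively on the fiber $(Y^\an)_{x_0}$ by monodromy, i.e.\ $H$ acts transitively on $F^\pet_{x_0}(Y)$, so $HU=G$. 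As $U$ was arbitrary, $H$ is dense.

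The remaining assertions follow formally from density. Given a discrete quotient $u\colon G\twoheadrightarrow G'$, each fiber $u^{-1}(g')$ with $g'\in G'$ is open in $G$ (as $\{g'\}$ is open in $G'$) and nonempty (by surjectivity), hence meets the dense subgroup $H$; therefore $u(H)=G'$, i.e.\ $u\circ c_X^{\tp\to\pet}$ is surjective. For the last statement, if $\phi\colon G\to G''$ is a continuous homomorphism into a discrete group with $\phi\circ c_X^{\tp\to\pet}$ trivial, then applying the previous remark to the discrete quotient $G\twoheadrightarrow\phi(G)$ gives $\phi(G)=\phi(H)=\{1\}$, so $\phi$ is trivial.

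The substantive inputs are thus \ref{geom.covers are covering spaces} and \cite[Exposé XII, Proposition 2.4]{SGA1}; everything after the construction — density, and the two consequences above — is formal. The step I expect to require the most care is the first paragraph: verifying that $\cov(X^\an)$ with the fiber functor at a $\C$-point really realizes $\pi_1^\tp(X^\an,x_0)$ as the fundamental group of a tame infinite Galois category, carrying the topology for which open subgroups correspond to connected covering spaces, so that the three comparison maps are continuous and are all instances of a single functoriality statement. Should this identification hold only after Noohi completion of $\pi_1^\tp(X^\an,x_0)$, one simply precomposes with the dense-image map from the discrete group to its completion, which affects none of the conclusions.
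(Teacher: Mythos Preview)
Your proposal is correct. The construction of $c_X^{\tp\to\pet}$ and the density argument follow the same line as the paper: both rest on the infinite Galois formalism of \cite[\S 7]{BS15} together with the fact that analytification preserves connectedness (\cite[Exposé XII, Proposition 2.4]{SGA1}); the paper packages the density step by citing \cite[Proposition 2.64]{Lara19} and \ref{full faithfulness}, while you unpack it directly via open subgroups and transitivity on fibers, which amounts to the same thing.

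The one genuine difference is your treatment of the final assertion about discrete quotients. The paper deduces it from the stability of $\Cov(X)\subset\cov(X^\an)$ under subobjects (the ``Moreover'' clause of \ref{full faithfulness}): a $\pi_1^\tp$-orbit in $G$ would be a subobject, hence algebraizable, hence a $\pi_1^\pet$-orbit, forcing transitivity. Your argument is purely formal: once $H$ is dense and $u$ is continuous onto a discrete group, every fiber $u^{-1}(g')$ is open and nonempty, hence meets $H$. This is more elementary and avoids invoking the subobject-stability part of \ref{full faithfulness} altogether; the paper's route, on the other hand, makes visible the categorical reason (algebraizability of sub-covers) behind the group-theoretic fact.
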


\begin{proof}
    The existence of $c_X^{\tp\to\pet}$ and the commutativity of the diagram
    follow readily from the infinite Galois theory à la Bhatt-Scholze
    (cf.~\cite[\S 7.2 and \S 7.4]{BS15}).

    The fact that $c_X^{\tp\to\pet}$ has dense image follows from
    \cite[Proposition 2.64]{Lara19} and \ref{full faithfulness}.

    For the last statement we can view $G$ as a
    set equipped with a transitive $\pi_1^\pet(X,x_0)$-action. By \ref{full faithfulness} any
    $\pi_1^\tp(X^\an,x_0)$-equivariant subset of $G$ is
    also a $\pi_1^\pet(X,x_0)$-equivariant subset. Thus $G$ is also a transitive
    $\pi_1^\tp(X^\an,x_0)$-set, \emph{i.e.} $u\circ c_X^{\tp\to\pet}$
    is surjective as desired.
\end{proof}

\printbibliography

@article {BS15,
    AUTHOR = {Bhatt, Bhargav and Scholze, Peter},
     TITLE = {The pro-\'{e}tale topology for schemes},
   JOURNAL = {Ast\'{e}risque},
  FJOURNAL = {Ast\'{e}risque},
    NUMBER = {369},
      YEAR = {2015},
     PAGES = {99--201},
      ISSN = {0303-1179},
      ISBN = {978-2-85629-805-3},
   MRCLASS = {14F05 (14F20 14F35 14H30 18B25)},
  MRNUMBER = {3379634},
MRREVIEWER = {Pieter Belmans},
}

@article{Gro60,
     author = {Grothendieck, Alexander},
     title = {Techniques de construction en g\'eom\'etrie analytique. {II.} {G\'en\'eralit\'es} sur les espaces annel\'es et les espaces analytiques},
     journal = {S\'eminaire Henri Cartan},
     note = {talk:9},
     publisher = {Secr\'etariat math\'ematique},
     volume = {13},
     number = {1},
     year = {1960-1961},
     zbl = {0142.33503},
     language = {fr},
     url = {http://www.numdam.org/item/SHC_1960-1961__13_1_A5_0/}
}

@book {Hat02,
    AUTHOR = {Hatcher, Allen},
     TITLE = {Algebraic topology},
 PUBLISHER = {Cambridge University Press, Cambridge},
      YEAR = {2002},
     PAGES = {xii+544},
      ISBN = {0-521-79160-X; 0-521-79540-0},
   MRCLASS = {55-01 (55-00)},
  MRNUMBER = {1867354},
MRREVIEWER = {Donald W. Kahn},
}

@article {JT94,
    AUTHOR = {Janelidze, George and Tholen, Walter},
     TITLE = {Facets of descent. {I}},
   JOURNAL = {Appl. Categ. Structures},
  FJOURNAL = {Applied Categorical Structures. A Journal Devoted to
              Applications of Categorical Methods in Algebra, Analysis,
              Order, Topology and Computer Science},
    VOLUME = {2},
      YEAR = {1994},
    NUMBER = {3},
     PAGES = {245--281},
      ISSN = {0927-2852},
   MRCLASS = {18C20 (18A20 18A40 18D30 54B40)},
  MRNUMBER = {1285884},
       DOI = {10.1007/BF00878100},
       URL = {https://doi.org/10.1007/BF00878100},
}

@phdthesis{Lara19,
author = {Lara, Marcin},
year = {2019},
title = {Homotopy Exact Sequence for the Pro-Étale Fundamental Group},
url = "http://dx.doi.org/10.17169/refubium-2773",
}

@book {Liu02,
    AUTHOR = {Liu, Qing},
     TITLE = {Algebraic geometry and arithmetic curves},
    SERIES = {Oxford Graduate Texts in Mathematics},
    VOLUME = {6},
      NOTE = {Translated from the French by Reinie Ern\'{e},
              Oxford Science Publications},
 PUBLISHER = {Oxford University Press, Oxford},
      YEAR = {2002},
     PAGES = {xvi+576},
      ISBN = {0-19-850284-2},
   MRCLASS = {14-01 (11G30 14A05 14A15 14Gxx 14Hxx)},
  MRNUMBER = {1917232},
MRREVIEWER = {C\'{\i}cero Carvalho},
}

@article {Loj64,
    AUTHOR = {Lojasiewicz, S.},
     TITLE = {Triangulation of semi-analytic sets},
   JOURNAL = {Ann. Scuola Norm. Sup. Pisa Cl. Sci. (3)},
  FJOURNAL = {Annali della Scuola Normale Superiore di Pisa. Classe di
              Scienze. Serie III},
    VOLUME = {18},
      YEAR = {1964},
     PAGES = {449--474},
      ISSN = {0391-173X},
   MRCLASS = {32.25},
  MRNUMBER = {173265},
MRREVIEWER = {S. S. Cairns},
}

@misc{LYZ21,
      title={A theorem on meromorphic descent and the specialization of the pro-\'etale fundamental group}, 
      author={Marcin Lara and Jiu-Kang Yu and Lei Zhang},
      year={2021},
      eprint={2103.11543},
      archivePrefix={arXiv},
      primaryClass={math.AG}
}

@BOOK{SGA1,
    AUTHOR = "Grothendieck, Alexander",
    TITLE = "Rev\^etements \'etales et groupe fondamental (SGA 1)",
    PUBLISHER = "Springer-Verlag",
    YEAR = "1971",
    SERIES = "Lecture notes in mathematics",
    VOLUME = "224"
}

@misc{stacks-project,
  author       = {The {Stacks project authors}},
  title        = {The Stacks project},
  howpublished = {\url{https://stacks.math.columbia.edu}},
  year         = {2019},
}

@article {Ver94,
    AUTHOR = {Vermeulen, J. J. C.},
     TITLE = {Proper maps of locales},
   JOURNAL = {J. Pure Appl. Algebra},
  FJOURNAL = {Journal of Pure and Applied Algebra},
    VOLUME = {92},
      YEAR = {1994},
    NUMBER = {1},
     PAGES = {79--107},
      ISSN = {0022-4049},
   MRCLASS = {54C10 (18B25)},
  MRNUMBER = {1259670},
MRREVIEWER = {J. R. Isbell},
       DOI = {10.1016/0022-4049(94)90047-7},
       URL = {https://doi.org/10.1016/0022-4049(94)90047-7},
}
\end{document}